\documentclass[12pt,oneside,reqno]{amsart}
\usepackage{geometry}[margins=1in]
\usepackage{amsrefs}
\usepackage{bbm}
\usepackage{mathrsfs}%
\usepackage{amsmath,amssymb,amsfonts,mathtools}%
\usepackage{amsthm}%
\usepackage[title]{appendix}%
\usepackage[dvipsnames]{xcolor}%
\usepackage{hyperref}
\hypersetup{
	colorlinks=true, 
	linktoc=all,     
	linkcolor=black,
	citecolor=black,
	filecolor=black,
	urlcolor=black,
}
\usepackage{thmtools}
\usepackage{thm-restate}
\usepackage{changepage}

\declaretheorem[numberwithin=section]{Theorem}
\declaretheorem[numbered=no, name=Theorem A]{prodsum}
\declaretheorem[numbered=no, name=Theorem B]{sumprod}
\declaretheorem[numbered=no, name=Theorem C]{fsbprod}
\declaretheorem[sibling=Theorem]{Lemma}
\declaretheorem[sibling=Theorem]{Proposition}
\declaretheorem[sibling=Theorem]{Corollary}

\theoremstyle{definition}
\declaretheorem[sibling=Theorem]{Definition}
\declaretheorem[sibling=Theorem]{Example}

\theoremstyle{remark}

\declaretheorem[sibling=Theorem]{Conjecture}
\declaretheorem[sibling=Theorem]{Question}

\newcommand{\f}[1]{\mathcal{#1}}

\newcommand{\fs}[1]{\f{P}_f\left(#1\right)}

\title{Infinite Sum-Product Configurations in Parallel}
\author{Conner Griffin}
\address{Department of Mathematics, Ohio State University, Columbus, OH}
\email{griffin.1101@osu.edu}

\begin{document}
	
	\begin{abstract}
		We show that for any finite partition of \(\mathbb{N}\) there is an infinite sequence whose finite sums are monochromatic and such that infinitely many of the products with a fixed number of factors are monochromatic -- though not necessarily belonging to the same color class as the finite sums. We are able to build these infinite configurations in parallel by refining arbitrary partitions of \(\mathbb{N}.\) We apply these techniques to prove that many complex infinite sum-product configurations are guaranteed to be monochromatic for arbitrary finite colorings of \(\mathbb{N}.\)
	\end{abstract}
	
	\maketitle
	
	\tableofcontents
	
	\section{Introduction}
	
	Throughout this paper \(\mathbb{N}\) is the set of positive integers; for any nonempty set \(S,\) \(\fs{S} = \{A\subseteq S: \ 0<\left|A\right|<\infty\};\) and for any \(\alpha, \beta \in \fs{\mathbb{N}}\), we write \(\alpha < \beta\) if and only if  \(\max \alpha < \min \beta.\)
	
	A cornerstone result of semigroup Ramsey theory is Hindman's theorem: in any finite partition of a semigroup there is a sequence such that all finite products over the sequence are monochromatic. \cite{H} This means for any finite partition of \(\mathbb{N},\) there is one cell which contains all of the finite sums of some infinite set and another cell that contains all of the finite products of another infinite set. Hindman further proved that in any finite partition of \(\mathbb{N}\) there is a single cell which contains all of the finite sums of some infinite set and all of the finite products of another infinite set. \cite{H79}
	
	In \cite{H80}, Hindman constructed a seven cell partition of \(\mathbb{N}\) such that there is no infinite set with its pairwise sums, pairwise products, and the infinite set being monochromatic. This established that it is not possible to take both infinite sets in the result of \cite{H79} to be the same set.
	
	\begin{Conjecture}\label{HindmansC} \cite{H80}*{Question 3.3}
		For any finite partition of \(\mathbb{N}\) there is an \(x\) and a \(y\) such that \(\{x,y,x+y,xy\}\) is monochromatic.
	\end{Conjecture}
	
	This conjecture has been resolved for 2-colorings of \(\mathbb{N}\) \cite{Bow25}, it has been shown that there is a monochromatic \(\{x,x+y,xy\}\) in any arbitrary finite partition of \(\mathbb{N}\) \cite{moreira2017monochromatic}, and it has been proven for  arbitrary finite partitions of \(\mathbb{Q}\) \cite{alw23}.
	
	It is a consequence of \cite{SMITH95}*{Theorem 3.16} that for all \(n \in \mathbb{N}\) there is a finite partition of \(\mathbb{N}\) such that there is no infinite set whose finite products and whose sums with exactly \(n\) terms are monochromatic. In this paper, it is also shown that for any arbitrary finite partition of \(\mathbb{N},\) there is a monochromatic infinite sum-product configuration of a particular form which will be discussed later. \cite{SMITH95}*{Theorem 2.4}
	
	Here, in some fashion, we continue the work of \cite{SMITH95} and prove many infinite sum-product configurations are guaranteed to be monochromatic for arbitrary finite partitions of \(\mathbb{N}.\)
	
	\begin{prodsum} If \(\mathbb{N} = \bigcup_{i=1}^mA_i\) then for all \(n \in \mathbb{N}\) there exist \(c_0,\dots , c_n \in \left[m\right]\) and an additive IP sequence \(\langle x_{\alpha} \rangle_{\alpha\in \fs{\mathbb{N}}}\) such that for all \(b\in \left[n\right]\) we have
		\begin{gather*}
			\{x_{\alpha}: \alpha \in \fs{\mathbb{N}}\} \subseteq A_{c_0},
			\\
			\{\prod_{s =1}^{b+1}x_{\alpha_s}: \alpha_s \in \fs{\mathbb{N}}; s<t \Rightarrow \alpha_s < \alpha_t\} \subseteq A_{c_b}.
		\end{gather*}
	\end{prodsum}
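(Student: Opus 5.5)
The plan is to reduce Theorem A to Hindman's theorem followed by $n$ applications of the Milliken--Taylor theorem, passing to a sum subsystem at each stage. Recall that an additive IP sequence $\langle x_\alpha\rangle_{\alpha\in\fs{\mathbb{N}}}$ is determined by its generators via $x_\alpha=\sum_{i\in\alpha}x_{\{i\}}$. A \emph{sum subsystem} is the IP sequence generated by $y_j = x_{\beta_j}$, where $\beta_1<\beta_2<\cdots$ are in $\fs{\mathbb{N}}$. It satisfies $y_\gamma = x_{\bigcup_{j\in\gamma}\beta_j}$.

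The key observation is that increasingness is preserved. If $\gamma_1<\cdots<\gamma_{b+1}$, then the sets $\delta_s=\bigcup_{j\in\gamma_s}\beta_j$ satisfy $\delta_1<\cdots<\delta_{b+1}$. Hence every increasing $(b+1)$-fold product over the subsystem $\langle y_\gamma\rangle$ is also an increasing $(b+1)$-fold product over $\langle x_\alpha\rangle$. Likewise $\{y_\gamma\}\subseteq\{x_\alpha\}$. So any monochromaticity property already established for the $x$'s is inherited by every sum subsystem.

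\textbf{Step 0.} Apply Hindman's theorem \cite{H} to $\mathbb{N}=\bigcup_{i=1}^m A_i$. This gives an additive IP sequence $\langle x^{(0)}_\alpha\rangle$ whose finite sums all lie in a single cell $A_{c_0}$.

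\textbf{Step $b$, for $b=1,\dots,n$.} Suppose $\langle x^{(b-1)}_\alpha\rangle$ has been constructed. Color each increasing $(b+1)$-tuple $\alpha_1<\cdots<\alpha_{b+1}$ in $\fs{\mathbb{N}}$ by the index $i\in[m]$ such that $\prod_{s=1}^{b+1}x^{(b-1)}_{\alpha_s}\in A_i$. This is an $m$-coloring of increasing $(b+1)$-tuples of finite sets. By the Milliken--Taylor theorem there is a block sequence $\beta_1<\beta_2<\cdots$ such that all increasing $(b+1)$-tuples of finite unions of the $\beta_j$ receive one color, which we call $c_b$. Let $\langle x^{(b)}_\gamma\rangle$ be the sum subsystem generated by $x^{(b-1)}_{\beta_j}$. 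By the observation above, every increasing $(b+1)$-fold product over $x^{(b)}$ lies in $A_{c_b}$. All properties from earlier steps persist, namely the sums lying in $A_{c_0}$ and the $b'$-products lying in $A_{c_{b'}}$ for $b'<b$.

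After step $n$, the sequence $x_\alpha=x^{(n)}_\alpha$ witnesses Theorem A.

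The only genuinely nontrivial input is Milliken--Taylor, which is external to the paper. If it is to be avoided, I would instead argue directly with an idempotent ultrafilter $p\in\beta\mathbb{N}$ with $A_{c_0}\in p$. In that version, $c_b$ is chosen as the cell lying in the image of the tensor power $p^{\otimes(b+1)}$ under multiplication. The sequence is then built inductively, keeping at each stage finitely many "good" sets in $p$ (of the form $A^\star$ and the shifted sets $\{y: x\cdot y\in\cdots\}$) for all prefixes already built.

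I expect the main obstacle to be this bookkeeping in the ultrafilter version: ensuring that each new generator keeps all partial products of every length $\le n+1$ extendable. That is exactly the work Milliken--Taylor packages, so I would first try to cite it and only unfold the ultrafilter induction if needed.
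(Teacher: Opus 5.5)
Your proof is correct, and it takes a genuinely different route from the paper's. The sum-subsystem bookkeeping is right. For a block sequence $\beta_1<\beta_2<\cdots$ one has $\gamma<\gamma'$ if and only if $\bigcup_{j\in\gamma}\beta_j<\bigcup_{j\in\gamma'}\beta_j$. Hence each application of Milliken--Taylor to increasing $(b+1)$-tuples preserves everything obtained earlier, and after $n$ steps you have Theorem A.

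You can even do it in one shot. Color increasing $(n+1)$-tuples by the vector of cells of $x_{\alpha_1},\, x_{\alpha_1}x_{\alpha_2},\dots,\prod_{s=1}^{n+1}x_{\alpha_s}$. Every increasing $(b+1)$-tuple from $\mathrm{FU}(\langle\beta_j\rangle)$ extends to an increasing $(n+1)$-tuple, so a single application with $m^{n+1}$ colors yields all the $c_b$ at once and absorbs Hindman's theorem.

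The paper works differently. It fixes an additive idempotent $e$ and chooses $c_0,\dots,c_n$ with $\bigcap_{t=0}^{n}D^t(A_{c_t})\in e$, that is, $A_{c_0}\in e$ and $A_{c_b}\in e^{b+1}$ (multiplicative powers). It then builds the sequence by a direct Hindman-style induction using the sets $F_s(y)$. This is essentially your fallback plan, since the image of $e^{\otimes(b+1)}$ under multiplication is exactly $e^{b+1}$.

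\textbf{What your route buys.} It is short and transparent. It shows Theorem A is a formal corollary of a classical theorem, and makes plain that this works only because the $c_b$ may be unrelated to one another and to $c_0$.

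\textbf{What the paper's route buys.} The paper's argument proves a stronger, ultrafilter-indexed statement: any cells with $A_{c_0}\in e$ and $A_{c_b}\in e^{b+1}$ can be used. This is what the next theorem on Ramsey families needs. There one first picks a single $A_j\in\bigcap_{b\in H}e^b$ and then runs Theorem A with all of those colors equal to $j$. Your black-box version cannot supply this, because its colors depend on choices made at earlier stages and are not tied to a fixed ultrafilter. Recovering it would mean unfolding the ultrafilter proof of Milliken--Taylor, which returns you to the paper's argument.
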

	
	A simple application of Ramsey's theorem implies that arbitrarily many of the \(c_t\)'s will be equal, but it does not guarantee that any of the \(c_t\) will be equal to \(c_0.\) In fact, for any reasonable choice of Ramsey theorem applied to \(\left[n\right],\) one has a natural corollary of Theorem A.
	
	By exchanging the two operations we get the following strengthening of \cite{SMITH95}*{Theorem 2.4}.
	
	\begin{sumprod}
		If \(\mathbb{N} = \bigcup_{i=1}^mA_i\) then for all \(n \in \mathbb{N}\) there exist \(c_0,\dots , c_n \in \left[m\right]\) and a multiplicative IP sequence \(\langle x_{\alpha} \rangle_{\alpha\in \fs{\mathbb{N}}}\) such that for all \(b\in \left[n\right]\) we have
		\begin{gather*}
			\{x_{\alpha}: \alpha \in \fs{\mathbb{N}}\} \subseteq A_{c_0}
			\\
			\{\sum_{s =1}^{b+1}x_{\alpha_s}: \alpha_s \in \fs{\mathbb{N}}; s<t \Rightarrow \alpha_s < \alpha_t\} \subseteq A_{c_b}
		\end{gather*}
	\end{sumprod}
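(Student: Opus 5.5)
We plan to prove Theorem B with ultrafilters on \(\mathbb{N}\). Fix an idempotent \(p\) of \((\beta\mathbb{N},\cdot)\); any one will do. Define additive iterates \(q_1 = p\) and \(q_{k+1} = p + q_k\), so that \(q_{b+1}\) is the \((b+1)\)-fold additive sum \(p+\cdots+p\). Choose \(c_0\) with \(A_{c_0}\in p\), and for \(b\in[n]\) choose \(c_b\) with \(A_{c_b}\in q_{b+1}\). The rest of the argument builds a sequence \(\langle x_k\rangle\), sets \(x_\alpha=\prod_{k\in\alpha}x_k\) (a multiplicative IP sequence), and checks that the required finite products and ordered sums land in the chosen cells. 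This is a Galvin--Glazer style induction, run in \(p\) for the products and in the \(q_k\) for the sums.

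We use two standard facts.
\begin{itemize}
\item[(i)] Multiplicative starring: if \(B\in p\), put \(B^\star=\{x\in B: x^{-1}B\in p\}\). Then \(B^\star\in p\), and \(x\in B^\star\) implies \(x^{-1}B^\star\in p\). Here \(x^{-1}B=\{y: xy\in B\}\).
\item[(ii)] Additive unfolding: if \(E\in q_{k+1}=p+q_k\), then \(\{x: -x+E\in q_k\}\in p\).
\end{itemize}
Combining them, for \(E\in q_{k+1}\) we set \(E' = \{x: -x+E\in q_k\}^\star\in p\). Then any product \(y\) of previously chosen terms lying in \(E'\) can be extended: both \(y^{-1}E'\in p\) and \(-y+E\in q_k\) hold.

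The induction hypothesis at stage \(N\) has two parts. The first part handles the product condition: for every \(\alpha\subseteq\{1,\dots,N\}\), \(x_\alpha\in A_{c_0}^\star\), together with the usual conditions giving \(x_\alpha^{-1}A_{c_0}^\star\in p\). The second part handles the sums, and is indexed by a \emph{state}. A state consists of an index \(b\in[n]\), a number \(j\le b\) of completed blocks \(\alpha_1<\dots<\alpha_j\subseteq\{1,\dots,N\}\) with sum \(s=\sum_{t\le j}x_{\alpha_t}\), and an optional current incomplete block \(\gamma\) with \(\alpha_j<\gamma\). For each state we require a set \(F\) in the appropriate ultrafilter:
\begin{itemize}
\item if \(\gamma\) is empty, we require \(-s+A_{c_b}\in q_{b+1-j}\);
\item if \(\gamma\) is nonempty, we require that \(x_\gamma\) lie in the starred set \(E'\) attached to \(E=-s+A_{c_b}\in q_{b+1-j}\), and hence that \(x_\gamma^{-1}E'\in p\).
\end{itemize}
At each stage there are only finitely many states. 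So we pick \(x_{N+1}\) in the intersection of the finitely many corresponding sets in \(p\): the sets \(x_\gamma^{-1}E'\), the sets \(E'\) for states with \(\gamma\) empty (which start a new block), and the sets \(x_\alpha^{-1}A_{c_0}^\star\) and \(A_{c_0}^\star\). We also require \(x_{N+1}\) to be large.

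Next we verify that the invariant propagates. Appending \(x_{N+1}\) to a current block uses (i). Closing a block moves \(s\) to \(s+x_\gamma\) and \(j\) to \(j+1\), and this uses (ii), since \(-(s+x_\gamma)+A_{c_b}\in q_{b-j}\). When \(j=b+1\) the ultrafilter index reaches \(q_0\). Here we make the convention that the requirement becomes ``\(0\in -s+A_{c_b}\)'', that is \(s\in A_{c_b}\). The last block is never continued in a way that creates a further requirement, so completing it exactly yields \(\sum_{t=1}^{b+1}x_{\alpha_t}\in A_{c_b}\). To make this clean, we treat the final step separately: for \(E\in q_1=p\) we use \(E^\star\) directly, so that \(x_\gamma\in(-s+A_{c_b})^\star\) gives \(s+x_\gamma\in A_{c_b}\).

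The main obstacle is the bookkeeping in this interaction. A single block product \(x_\gamma\) has to serve two roles. It must remain in a multiplicatively starred set so that it can be extended, and it must also be a valid additive ``first coordinate'' for the next ultrafilter \(q_k\) once the block closes. The point to check carefully is that starring the set \(\{x: -x+E\in q_k\}\) with respect to \(p\) gives both properties at once. This works because \(p\) is simultaneously the multiplicative idempotent and the left additive factor of every \(q_{k+1}\).
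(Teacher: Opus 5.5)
Your proposal is correct and takes essentially the paper's route. The paper proves Theorem B by running the proof of Theorem A with the operations exchanged: it fixes a multiplicative idempotent, chooses $c_b$ so that $A_{c_b}$ lies in its $(b+1)$-fold additive power, and carries out a Galvin--Glazer induction whose sets $F_s(y)$ (a partial sum $u$ of $r$ completed blocks together with the current block, targeting $A_{c_t}$) encode exactly your ``states''. One small correction: take $p$ nonprincipal, which is possible since $(\beta\mathbb{N},\cdot)$ has idempotents in $\beta\mathbb{N}\setminus\mathbb{N}$. The principal ultrafilter at $1$ is also a multiplicative idempotent, so ``any one will do'' is not quite right: for that $p$, your requirement that $x_{N+1}$ be large cannot in general be met.
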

	
	The following question and the analogous version in which sums and products are exchanged is open.
	
	\begin{Conjecture}\label{bfactors}
		If \(\mathbb{N} = \bigcup_{i=1}^mA_i\) then there exist \(j \in \left[m\right],\) \(b >1\) and a sequence \(\langle x_t \rangle_{t \in \mathbb{N}}\) such that
		\[\{\sum_{t\in \alpha}x_t, \ \prod_{s=1}^b\sum_{t\in \alpha_s}x_t: \ \alpha \in \fs{\mathbb{N}}, \ \alpha_1< \alpha_2 < \dots < \alpha_b \in \fs{\mathbb{N}}\}\subseteq A_j.\]
	\end{Conjecture}
	
	An affirmative answer to \autoref{bfactors} would follow from \autoref{returnTime}.
	\begin{Conjecture}\label{returnTime}
		The collection of sets \( \bigcup_{e \in E_+} \bigcup_{t \in \mathbb{N}}\left(e\cap e^{t+1}\right) \) has the Ramsey property.
	\end{Conjecture}
	
	The operation reversed version of \autoref{returnTime} is open, as well.
	
	The final result of this paper is a strengthening of the central result of \cite{H79}.
	
	\begin{fsbprod}
		If \(\mathbb{N} = \bigcup_{i=1}^mA_i\) then for all \(n \in \mathbb{N}\) there exists \(j \in \left[m\right]\) and an additive IP sequence \(\langle x_{\alpha} \rangle_{\alpha \in \fs{\mathbb{N}}} \) and a multiplicative IP sequence \(\langle y_{\alpha} \rangle_{ \alpha \in \fs{ \mathbb{N} } }\) such that
		\[\{x_{\alpha},y_{\alpha},x_{\alpha}y_{\beta}: \alpha < \beta\} \subseteq A_j.\]
	\end{fsbprod}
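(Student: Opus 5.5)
We work in \(\beta\mathbb{N}\) and follow the ultrafilter proof of the central result of \cite{H79}. That argument produces an idempotent \(p\) for \((\beta\mathbb{N},+)\) and an idempotent \(q\) for \((\beta\mathbb{N},\cdot)\) with a common member \(A_j\). Here the witness is strengthened: we choose \(q\) multiplicatively idempotent in the closure of the additive idempotents, for instance \(q\) minimal in \((\overline{E(\beta\mathbb{N},+)},\cdot)\). The cell \(A_j\) is then chosen with \(A_j\in q\) and \(A_j\in p\), where \(p\) is some additive idempotent. Since \(q\) lies in the closure of the additive idempotents, every member of \(q\) belongs to some additive idempotent, and this is how we get such a \(p\). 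The parameter \(n\) plays no essential role in the displayed configuration, so the plan is simply to produce the configuration for one \(j\).

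The aim is to find one ultrafilter whose members are all large enough to run the two Galvin--Glazer constructions simultaneously. We want \(A_j\in p\) and \(A_j\in q\), and in addition we want \(\{x: x^{-1}A_j\in q\}\in p\) together with \(\{y: y^{-1}A_j \in p\}\) or the corresponding statement for \(q\). Equivalently, we want \(A_j\in p\cdot q\). First I would try \(q\) with \(p\cdot q = q\), which gives \(A_j\in p\cdot q\) immediately. Multiplicative left ideals absorb: if \(q\) lies in a minimal left ideal \(L\) of \((\beta\mathbb{N},\cdot)\), then \(r\cdot q\in L\) for every \(r\). Any minimal left ideal \(L\) contains a multiplicative idempotent \(e\) with \(r\cdot e = e\) whenever \(r\) lies in the minimal right ideal containing \(e\). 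Since this is not automatic for \(p\), we instead fix \(q\) first and define \(C=\{x: x^{-1}A_j\in q\}\). We then need \(C\cap A_j\) to be a member of an additive idempotent, and we arrange this by choosing \(A_j\) in \(q\cdot q\cap q\), using \(q=q\cdot q\) and the density of additive idempotents. More precisely, pick \(A=A_j\in q\). Then \(A^\star=\{x\in A: x^{-1}A\in q\}\in q\), and since \(q\in\overline{E(+)}\), some additive idempotent \(p\) contains \(A^\star\).

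Given such \(p\) and \(q\), we build the sequences by the usual diagonal induction. At stage \(k\) we keep finitely many sets \(B_k\in p\) and \(D_k\in q\). We choose \(x_k\in B_k^\star\), meaning \(-x_\alpha+B^\star\in p\) for the relevant sums \(x_\alpha\), so that all finite sums stay in \(A^\star\). We likewise choose \(y_k\) in the multiplicative star set of \(D_k\), intersected with \(\bigcap_{\alpha\subseteq[k-1]}x_\alpha^{-1}A\). This intersection is in \(q\) because each \(x_\alpha\in A^\star\). With this choice, \(x_\alpha y_\beta\in A\) holds automatically for \(\alpha<\beta\) only if each product \(y_\beta\) also lies in \(x_\alpha^{-1}A\). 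To get this, we maintain the set \(x_\alpha^{-1}A\) inside the multiplicative star tails: at stage \(k\) the next \(y\)-choice must lie in \(\bigcap_{\gamma,\alpha}\,y_\gamma^{-1}(x_\alpha^{-1}A)^\star\), where \(\gamma\subseteq[k-1]\) and \(\alpha<\) the relevant indices. This is a finite intersection of members of \(q\), so it is nonempty.

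The main obstacle is the ordering: \(\alpha<\beta\) requires that all \(x\)'s with indices below \(\min\beta\) are fixed before the \(y_t\) with \(t\in\beta\) are chosen. We resolve this by interleaving the two inductions, choosing \(x_k\) and then \(y_k\) at each stage. The \(x\)-choices need no constraint involving later \(y\)'s. The key verification is that, after \(x_1,\dots,x_k\) are chosen, every set \((x_\alpha^{-1}A)^\star\) with \(\alpha\subseteq[k]\) is in \(q\), which follows from \(x_\alpha\in A^\star\). The \(y\)-products over \(\beta\) with \(\min\beta>k\) then land in \(x_\alpha^{-1}A\) by the standard Galvin--Glazer propagation of star sets. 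The remaining step is a routine check that the tails used at each stage are finite intersections of members of \(p\) and of \(q\) respectively.
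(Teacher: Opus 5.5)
Your proposal is correct and follows essentially the same route as the paper. Both take a multiplicative idempotent \(q\) in the multiplicative left ideal \(c\ell(E_+)\), a cell \(A_j\in q\), and an additive idempotent \(p\) containing \(A^\star=A_j\cap D(A_j,q)\) (the paper's \(C_1^*\in f\)), and then run an interleaved Galvin--Glazer induction that keeps all finite sums in \(A^\star\) and places each new \(y\) in the star sets of the finitely many \(x_\alpha^{-1}A_j\) with \(\alpha\) below it. Your induction is cleaner than the paper's, which carries redundant \(D^s(A_j,e)\) terms justified by \(D^t(A_j,e)=D(A_j,e)\), and you are right that \(n\) plays no role in the statement.
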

	
	\section{Ultrafilters and monochromatic configurations}
	
	\begin{Definition}
		Let \(S\) be a set and \(\emptyset \ne \f{F}\subseteq \f{P}\left(S\right)\).
		\begin{itemize}
			\item We call \(\f{F}\) a \emph{filter} on \(S\) if
			\begin{itemize}
				\item \(\emptyset \notin \f{F};\)
				\item if \(A, B \in \f{F}\) then \(A\cap B \in \f{F};\)
				\item if \(A \in \f{F}\) and \(A \subseteq B\) then \(B \in \f{F}.\)
			\end{itemize}
			\item We say that \(\f{F}\) has the \emph{Ramsey property} if \(A\cup B \in \f{F}\) implies that \(A \in \f{F}\) or \(B\in  \f{F}.\)
			\item We call \(\f{F}\) an \emph{ultrafilter} if it is a filter which has the Ramsey property.
		\end{itemize}
	\end{Definition}
	
	We will follow the convention of using the lower case \(p,q,r\) for ultrafilters and the uppercase cursive \(\f{F}, \f{G}\) for filters.
	
	The filters and ultrafilters on \(\mathbb{N}\) inherit two binary operations from multiplication and addition.
	
	\begin{Definition}
		For \(x \in \mathbb{N}\) and \(A \subseteq \mathbb{N}\) define \(x^{-1}A = \{y \in \mathbb{N}: \ xy \in A\},\) \(-x+A = \{y \in \mathbb{N}: x+y \in A\},\) \(xA = \{xy: \ y \in A\},\) and \(x+A = \{x+y: y\in A\}.\)
	\end{Definition}
	
	\begin{Definition}
		Let \(\f{F},\f{G}\) be filters on \(\mathbb{N}.\) Define
		\[A \in \f{F}+\f{G} \iff \left(\exists  U \in \f{F}\right)\left(\forall x \in U\right)-x+A \in \f{G}\]
		and
		\[A \in \f{F}\cdot \f{G} \iff \left(\exists  U \in \f{F}\right)\left(\forall x \in U\right)x^{-1}A \in \f{G}.\]
		Note that each operation is associative and that \(\f{F}+\f{G}\) and \(\f{F}\cdot \f{G}\) are filters. When both \(\f{F}\) and \(\f{G}\) are ultrafilters then their sum and product are each ultrafilters as well.
	\end{Definition}
	
	\begin{Definition}
		For any ultrafilter \(p\) on \(\mathbb{N}\) and \(A \subseteq \mathbb{N}\) define the following and note that if the ultrafilter \(p\) is clear, it will typically be dropped from the notation.
		\begin{itemize}
			\item \(D\left(A,p\right) = \{x \in \mathbb{N}: \ x^{-1}A \in p\};\)
			\item \(T\left(A,p\right) = \{x \in \mathbb{N}: -x+A \in p\};\)
		\end{itemize}
	\end{Definition}
	
	Thus, we have \(A \in p \cdot q\) if and only if \(D\left(A,q\right) \in p\) and \(A \in p + q\) if and only if \(T\left(A,q\right) \in p.\)
	
	\begin{Definition}
		The \emph{Stone-\v{C}ech compactification} of \(\mathbb{N},\) is \[\beta \mathbb{N} = \{p: \ p \ \textrm{is an ultrafilter on} \ \mathbb{N}\}\] together with the closed base \(\{\overline{A}: \ A \subseteq \mathbb{N}\}\) where \(\overline{A} = \{p \in \beta \mathbb{N}: \ A\in p\}.\) 
	\end{Definition}
	
	The Stone-\v{C}ech compactification of \(\mathbb{N}\) contains both multiplicative and additive idempotents. \cite{ellis}
	
	\begin{Definition}The following subsets of \(\beta \mathbb{N}\) are nonempty.
		\begin{enumerate}
			\item \(E_+ = \{p \in \beta \mathbb{N}: p=p+p\}\)
			\item \(E_{\times} = \{p \in \beta \mathbb{N}: p=p\cdot p\}\)
		\end{enumerate}
	\end{Definition}
	
	\begin{Definition}
		For \(p \in \beta \mathbb{N}\), define \(p^1 =p\) and for \(n>1\) define \(p^{n} = p^{n-1}\cdot p.\) 
	\end{Definition}
	
	An important point when considering \autoref{returnTime}: if \(e \in \beta\mathbb{N}\) has the property that \(e = e^n\) for some \(n>1,\) then \(e\) is a multiplicative idempotent. If it were not a multiplicative idempotent, then we would have a finite group generated by \(e\) which would have order greater than \(1\). Thus contradicting the results of \cite{Ze94}. As a consequence of \cite{H80}, \(\beta \mathbb{N}\) contains no ultrafilter that is simultaneously a multiplicative and additive idempotent as this would imply that the counterexample constructed there would not exist.
	
	\begin{Question}Are there additive idempotent ultrafilters which have finite multiplicative order? That is, is there an \(e \in E_+\) such that \(e^{n}=e^{n+1}?\)\end{Question}
	
	This question has some precedent. It is the case that there are ultrafilters of all finite additive orders. \cite{Ze22} However, as a consequence of the counter example constructed in \cite{SMITH95}, every multiplicative idempotent has infinite additive order. Note that the counter example constructed there does not contradict an adddition-multiplication exchanged version of \autoref{returnTime} or \autoref{bfactors}.
	
	We highlight the usefulness of using additively idempotent ultrafilters to construct monochromatic sum-product configurations through this new proof of Goswami's theorem. \cite{Gos26} This proof and the induced coloring involved served as the primary motivation to seek out the parallel configurations we will construct in later sections.
	\begin{Theorem} \label{GT}
		If \(\mathbb{N} = \bigcup_{i=1}^mA_i\)   there exist \(j \in \left[m\right],\) \(x,y \in \mathbb{N}\) such that \(\{x,y,xy,y+xy\} \subseteq A_j.\)
	\end{Theorem}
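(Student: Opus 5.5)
The plan is to put all four requirements onto a single ultrafilter through an induced colouring. Rewrite \(y+xy\) as \((x+1)y\), so the target is
\[\{x,\; y,\; xy,\; (x+1)y\}\subseteq A_j .\]
Suppose we had \(x\in A_j\) with
\[A_j\cap x^{-1}A_j\cap (x+1)^{-1}A_j\ne\emptyset .\]
Then any \(y\) in this set finishes the proof. So the goal is to produce an ultrafilter \(p\), a colour \(j\) with \(A_j\in p\), and \(x\in A_j\) such that \(x^{-1}A_j\in p\) and \((x+1)^{-1}A_j\in p\).

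\textbf{Step 1 (the induced colouring).} Fix an ultrafilter \(p\), to be chosen in Step 2. For each \(x\in\mathbb{N}\) the sets \(x^{-1}A_1,\dots,x^{-1}A_m\) partition \(\mathbb{N}\). By the Ramsey property exactly one of them lies in \(p\); call its index \(f(x)\). Then \(f^{-1}(i)=D(A_i,p)\). Let \(k\) be the colour with \(A_k\in p\). It suffices to find \(x\) with
\[x\in A_k\cap D(A_k,p)\quad\text{and}\quad x+1\in D(A_k,p).\]
Indeed, then \(A_k\cap x^{-1}A_k\cap (x+1)^{-1}A_k\in p\) is nonempty, and \(j=k\) works.

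Write \(B=A_k\cap D(A_k,p)\). The required condition becomes
\[B\cap \bigl(-1+D(A_k,p)\bigr)\ne\emptyset .\]

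\textbf{Step 2 (choice of \(p\)).} I would take \(p\) to be a multiplicative idempotent. Then \(A_k\in p=p\cdot p\) gives \(D(A_k,p)\in p\), so both \(B\) and \(D(A_k,p)\) lie in \(p\). It then suffices that every member \(C\) of \(p\) contains a pair \(x,x+1\): apply this to \(C=B\), and note \(B\subseteq D(A_k,p)\), so \(x+1\in D(A_k,p)\).

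This requirement cannot be met naively. No ultrafilter has every member containing two consecutive integers; for instance one of the even or odd numbers lies in \(p\). The step therefore has to be adjusted using the additive structure. This is where additive idempotents should enter.

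The intended adjustment is as follows. Take \(q\in E_+\) and work with a multiplicative idempotent \(p\) lying in the closed two-sided multiplicative ideal \(\overline{K(\beta\mathbb{N},+)}\) (or in the closure of \(E_+\)). Every member of such \(p\) is additively rich, for example piecewise syndetic. Then replace the shift by \(1\) with a shift by an element \(d\) coming from \(q\). Concretely, rescale: look for \(x\) with
\[x,\; x+d\in D(A_k,p)\cap dA_k,\]
and use \((x+d)y=xy+dy\). One then chooses the colouring argument so that the roles of \(1\) and \(d\) are matched. This may mean applying the whole argument to the dilated colouring \(n\mapsto\chi(dn)\), or using that \(q+q=q\) yields \(d\) with \(-d+C\in q\).

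\textbf{Main obstacle.} The hardest step is this last one: showing that the set \(B\in p\) meets a translate of \(D(A_k,p)\) by the correct amount. Equivalently, one must choose \(p\) so that the multiplicative idempotence, which gives \(D(A_k,p)\in p\), and the additive richness coming from \(E_+\) act on the same set. My first attempt would be to use that \(\overline{K(\beta\mathbb{N},+)}\) is a multiplicative ideal. This lets \(p\) be a multiplicative idempotent all of whose members are additively central. Central sets contain \(\{x,x+d\}\) with \(d\) ranging over an IP set, and one would pick \(d\) in an IP set compatible with the dilation by a Ramsey/pigeonhole argument over the \(m\) colours.
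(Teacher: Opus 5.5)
Your Step 1 reduction is sound, and you correctly see why Step 2 fails as stated: no ultrafilter can force $x\in A_k\cap D(A_k,p)$ together with $x+1\in D(A_k,p)$. The gap is that the proposed repair is never carried out, and once unwound it is circular. Suppose $x,x+d\in D(A_k,p)$ with $x=dx_0\in dA_k$. To reach the target you must take $X=x_0$ and $Y=dy$. You then also need $dy\in A_k$, i.e.\ $d\in D(A_k,p)$, which your displayed condition omits. The resulting requirement is
\[x_0\in A_k\quad\text{and}\quad \{d,\ dx_0,\ dx_0+d\}\subseteq D(A_k,p).\]
This is exactly the configuration $\{x,y,xy,xy+y\}$ of the theorem, with $x=x_0$ and $y=d$, now spread over the two sets $A_k$ and $D(A_k,p)$, both in $p$. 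Nothing in the proposal solves it. Central sets do not close the gap either. They supply $\{d,x,x+d\}$ inside $A_k\cap D(A_k,p)$, but not the divisibility $x\in dA_k$, and that divisibility is the whole difficulty. Also, a multiplicative idempotent in $\overline{K(\beta\mathbb{N},+)}$ is only guaranteed to have piecewise syndetic members; for central members you would need $p$ in the closure of the minimal additive idempotents. The ``Ramsey/pigeonhole argument over the $m$ colours'' is left unspecified, and that is precisely where the proof would have to happen.

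The missing idea, which the paper uses, is to drop the multiplicative idempotent and let an additive idempotent $e$ supply the divisibility. Since $n\mathbb{N}\in e$ for every $n$, each $n^{-1}\cdot e=\{\tfrac1n A\cap\mathbb{N}:A\in e\}$ is an ultrafilter. So each $n$ can be coloured by the $c(n)$ with $nA_{c(n)}\in e$; pick $j$ with $c^{-1}(j)=D^{-1}(A_j,e)\in e$. For $n\in c^{-1}(j)$, additive idempotence puts $c^{-1}(j)\cap nA_j\cap T(nA_j)$ in $e$. Any element $k=ny$ of this set, with $y\in A_j$, gives $kA_j\in e$ and $-k+nA_j\in e$. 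That is, $A_j$, $yA_j$ and $-y+A_j$ all lie in $n^{-1}\cdot e$. An element $a=xy$ of their intersection gives $\{x,y,xy,xy+y\}\subseteq A_j$. The difference from your formulation is that the shift is by $y$ applied to the product $xy$, not by $1$ applied to the factor $x$. Divisibility of $a$ by $y$ is built in through the set $yA_j$, so consecutive integers, or a matched pair $1,d$, never have to be produced.
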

	
	\begin{proof}
		Suppose \(\mathbb{N} = \bigcup_{i=1}^mA_i.\) Let \(e\) be an additively idempotent ultrafilter. Then for all \(n \in \mathbb{N},\) \(n^{-1} \cdot e = \{\frac{1}{n}A\cap \mathbb{N}: A \in e\}\) is an ultrafilter on \(\mathbb{N}\). See \cite{HiSt1}*{Theorem 15.23.2} for details on this ultrafilter. Hence
		\begin{gather*}
			\left(\forall n \in \mathbb{N}\right) \left( \exists c(n) \in \left[m\right]\right)  nA_{c(n)} \in e.
		\end{gather*}
		
		(A note for those familiar with arguments of this type. If this was defined in terms of pre-images of multiplication instead of forward images then the color classes would be \(D(A_j) = \{x \in \mathbb{N}: x^{-1}A_j \in e\}.\))
		
		As \(c\) defines a coloring of \(\mathbb{N}\) there exists a \(j \in \left[m\right]\) such that \(c^{-1}\left(j\right) \in e.\)
		Hence
		\begin{gather*}
			\left(\forall n \in c^{-1}\left(j\right)\right) c^{-1}\left(j\right)\cap n A_j \cap T\left(nA_j\right) \in e.
		\end{gather*}
		where \(T\left(nA_j\right) = \{y\in \mathbb{N}: -y+nA_j \in e\}.\)
		
		From here define \(B_n =c^{-1}\left(j\right)\cap nA_j \cap T\left(nA_j\right).\) Hence \(\left(\forall k \in B_n\right),\)
		\begin{gather}
			kA_j \in e, \label{dilate}\\
			-k+nA_j \in e, \ \textrm{and} \label{translate}\\
			\left(\exists y \in A_j\right) k=ny. \label{divides}
		\end{gather}
		Then from \eqref{dilate}, \eqref{translate}, and \eqref{divides},
		\begin{gather*}
			nyA_j \in e \ \textrm{and} \\
			-ny+nA_j \in e
		\end{gather*}
		Finally we have
		\begin{gather*}
			y \in A_j, \\
			A_j \in n^{-1}\cdot e,\\
			yA_j \in n^{-1}\cdot e, \\
			-y+A_j \in n^{-1}\cdot e.
		\end{gather*}
		
		Hence \(A_j \cap yA_j \cap -y+A_j \in n^{-1}\cdot e\) and in particular this set is nonempty.
		
		Let \(a \in A_j \cap yA_j \cap -y+A_j.\) There exists \(x \in A_j\) such that \(a=xy\) and then we have \(\{x,y,xy,xy+y\}\subseteq A_j.\)
	\end{proof}
	
	We can get a result similar to \autoref{GT} by exchanging the two operations; however, due to the distributive property, \autoref{GT} reduces to a configuration in two variables. The analogous result does not have an equivalent reduction.
	
	\begin{Lemma}
		Let \(p \in \beta \mathbb{N}\) be a non-principal ultrafilter, then for all  \(n \in \mathbb{N},\) \(n +\mathbb{N} \in p.\)
	\end{Lemma}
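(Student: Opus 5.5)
The plan is to argue by complementation, using only the filter and Ramsey properties from the definition of ultrafilter together with the meaning of \emph{non-principal}: \(p\) is not of the form \(\{A\subseteq \mathbb{N}: k\in A\}\) for any \(k\in\mathbb{N}\). Fix \(n\in\mathbb{N}\). The complement of \(n+\mathbb{N}\) in \(\mathbb{N}\) is the finite set \(\{1,\dots,n\}\). Since \(\mathbb{N}\in p\) and
\[\mathbb{N} = \{1\}\cup\{2\}\cup\dots\cup\{n\}\cup\left(n+\mathbb{N}\right),\]
repeated use of the Ramsey property (formally, an induction on the number of pieces) shows that one of these \(n+1\) sets belongs to \(p\).

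It therefore suffices to exclude every singleton \(\{k\}\) with \(k\le n\). Suppose \(\{k\}\in p\). Upward closure then puts every \(A\) with \(k\in A\) into \(p\). Conversely, if \(k\notin A\), then \(A\cap\{k\}=\emptyset\), so \(A\in p\) would give \(\emptyset\in p\) by closure under intersection, which is forbidden. Hence \(p=\{A: k\in A\}\) would be the principal ultrafilter at \(k\), contradicting the hypothesis.

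So the only remaining option is \(n+\mathbb{N}\in p\). There is no real obstacle here; the one point needing care is that non-principality is stated as \(p\) not being principal, so we must derive from it that \(p\) contains no singleton, which is exactly the filter argument in the previous paragraph. If instead one takes as the definition that a non-principal ultrafilter contains no finite sets, the lemma follows in one line: \(\{1,\dots,n\}\notin p\), and the Ramsey property applied to \(\mathbb{N}=\{1,\dots,n\}\cup(n+\mathbb{N})\) gives \(n+\mathbb{N}\in p\).
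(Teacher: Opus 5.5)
Your proof is correct and follows the same route as the paper: write \(\mathbb{N}=\{1,\dots,n\}\cup(n+\mathbb{N})\), apply the Ramsey property, and exclude the finite piece using non-principality. The only difference is that the paper simply asserts that a non-principal ultrafilter contains no finite sets, whereas you split \(\{1,\dots,n\}\) into singletons and derive their exclusion from the filter axioms, which is a harmless and slightly more self-contained elaboration.
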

	
	\begin{proof}
		\(\mathbb{N}+n = \mathbb{N} \setminus \left[n\right].\) Since \(\mathbb{N} = \mathbb{N}+n \cup \left[n\right] \in p\) and \(p\) contains no finite sets, \(\mathbb{N}+n \in p.\)
	\end{proof}
	
	\begin{Lemma}
		Let \(p \in \beta \mathbb{N}\) be a non-principal ultrafilter, then \(-n+p = \{A-n\cap\mathbb{N}: A\in p\}\) is a non-principal ultrafilter.
	\end{Lemma}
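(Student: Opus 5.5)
The plan is to check the ultrafilter axioms directly for the family \(-n+p = \{(A-n)\cap \mathbb{N} : A \in p\}\). Here \((A-n)\cap\mathbb{N} = \{y \in \mathbb{N}: y+n \in A\} = -n+A\) in the notation of the earlier definition. A cleaner description is
\[-n+p = \{B \subseteq \mathbb{N}: n+B \in p\},\]
and I would first show the two descriptions agree. If \(B = -n+A\) with \(A \in p\), then \(n+B = A\cap (n+\mathbb{N})\). By the preceding lemma \(n+\mathbb{N}\in p\), so \(n+B\in p\). Conversely, if \(n+B \in p\), take \(A = n+B\); since translation by \(n\) is injective, \(-n+A = B\).

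With the second description, the filter axioms follow quickly. First, \(n+\emptyset=\emptyset\notin p\), so \(\emptyset \notin -n+p\). Second, translation commutes with intersections, \(n+(B\cap C) = (n+B)\cap(n+C)\), so \(-n+p\) is closed under intersection. Third, upward closure holds because \(B\subseteq C\) implies \(n+B \subseteq n+C\). Fourth, \(-n+p\) is nonempty since \(\mathbb{N}\) belongs to it, again by the preceding lemma.

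For the Ramsey property, suppose \(B\cup C\in -n+p\). Then \(n+(B\cup C) = (n+B)\cup(n+C)\in p\). Since \(p\) is an ultrafilter, \(n+B\in p\) or \(n+C\in p\), so \(B\) or \(C\) lies in \(-n+p\).

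For non-principality, a finite \(F\subseteq\mathbb{N}\) has finite translate \(n+F\). Because \(p\) is non-principal it contains no finite sets, so \(n+F\notin p\) and hence \(F\notin -n+p\).

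The only real subtlety is the identification of the two descriptions, since intersecting with \(\mathbb{N}\) discards the elements of \(A\) in \([n]\). This is exactly where the preceding lemma (\(n+\mathbb{N}\in p\)) is needed. Everything else is routine bookkeeping with translations.
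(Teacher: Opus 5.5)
Your proof is correct and follows essentially the same route as the paper: both transport the Ramsey property of \(p\) through the translation \(B \mapsto n+B\), using the preceding lemma \(n+\mathbb{N}\in p\) to get \(n+((A-n)\cap\mathbb{N}) = A\cap(n+\mathbb{N})\in p\). Your upfront identification \(-n+p=\{B\subseteq\mathbb{N}: n+B\in p\}\) also writes out the filter axioms and the non-principality check, which the paper dismisses as routine.
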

	
	\begin{proof}
		That it is a filter is simple to check. Suppose \(A \cup B \in -n+p.\) Then there exists \(C \in p\) such that \(A \cup B = C-n \cap \mathbb{N}.\) Equivalently, \(\left(A+n\right) \cup \left(B+n\right) = C \cap \left(\mathbb{N}+n\right) \in p.\) By the Ramsey property of \(p,\) \(A+n\) or \(B+n\) belong to \(p.\) Without loss of generality, assume \(A+n \in p.\) Then \(A = \left(A+n\right)-n \subseteq \mathbb{N}.\) Hence, \(A \in -n+p.\) That every set in \(-n+p\) is infinite is clear.
	\end{proof}
	
	We will see that we can get an infinite version of the following result.
	
	\begin{Theorem}
		Let \(\mathbb{N} = \bigcup_{i=1}^m A_i.\) Then there is a \(j \in \left[m\right]\) such that there exist \(x_1,x_2,x_3 \in \mathbb{N}\) with
		\[\{x_2-x_1, \ x_3-x_2, \ x_3-x_1, \ x_3x_2-x_1\} \subseteq A_j.\]
	\end{Theorem}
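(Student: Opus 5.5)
The plan is to strip the ordering constraints out of the configuration, reduce to a one-parameter polynomial recurrence statement, and obtain that recurrence from a minimal additive idempotent.

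\textbf{Reduction.} Put \(a = x_2-x_1\), \(b = x_3-x_2\) and \(y = x_2\), so that \(x_1 = y-a\) and \(x_3 = y+b\). Then
\[x_3-x_1 = a+b, \qquad x_3x_2-x_1 = (y+b)y - y + a = a + g_b(y), \qquad g_b(y) = y^2+(b-1)y .\]
It therefore suffices to find \(j\) and \(a,b,y\in\mathbb{N}\) with \(y>a\) such that
\[\{a,\ b,\ a+b,\ a+g_b(y)\}\subseteq A_j .\]
For each fixed \(b\), the polynomial \(g_b\) has integer coefficients and zero constant term. This is exactly the setting of polynomial recurrence.

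\textbf{Ultrafilter choice.} Let \(p\) be a minimal idempotent of \((\beta\mathbb{N},+)\), and choose \(j\) with \(A_j\in p\). Since \(p=p+p\), the set \(T(A_j,p)=\{b: -b+A_j\in p\}\) lies in \(p\), so we may fix \(b\in A_j\cap T(A_j,p)\). Then
\[C = A_j\cap(-b+A_j)\in p,\]
so \(C\) is a central set. Any \(a\in C\) automatically gives \(a\in A_j\) and \(a+b\in A_j\). It remains to produce \(a\in C\) and \(y>a\) with \(a+g_b(y)\in A_j\).

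\textbf{Recurrence step.} For this I would invoke polynomial recurrence for central sets (the IP polynomial Szemer\'edi/Bergelson--Leibman theorem, or its ultrafilter form). For a central \(C\) and \(g\) with \(g(0)=0\), it says that \(R=\{y: C\cap(C-g(y))\in p\}\) is an IP\(^*\) set, in particular nonempty and unbounded. This is the one ingredient imported from outside the paper. Taking \(y\in R\) gives \(a\in C\) with \(a+g_b(y)\in C\subseteq A_j\).

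\textbf{Main obstacle: \(y>a\).} The step I expect to be hardest is securing \(y>a\), since the recurrence produces \(a\) after \(y\) and gives no size comparison. My first attempt is to exploit the flexibility in choosing \(y\): since \(R\) is IP\(^*\), it meets every IP set, so the goal is to argue that for some \(y\in R\), the set \(C\cap(C-g(y))\) contains an element below \(y\).

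To arrange this, I would use the shifted ultrafilters \(-n+p\) from the preceding lemmas. The aim is to replace \(C\) by a translate \(-n+C\), which is still a member of the non-principal ultrafilter \(-n+p\). This lets the base point \(a\) be measured relative to a fixed small anchor \(x_1=n\), rather than requiring \(a\) itself to be small.

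Concretely, I would set \(x_1=n\) and seek \(x_2,x_3\) inside a shifted copy of the configuration. One would then rerun the choice of \(b\) and the recurrence step with \(p\) replaced by \(-n+p\), checking that this substitution yields \(x_2>x_1\) for free. If the shifted ultrafilter is not idempotent enough for the recurrence theorem, the fallback is to use density. In that case I would choose \(y\) from \(R\) and apply the Furstenberg correspondence on a long interval \([1,N]\) with \(N\ll y\), forcing \(a\le N<y\).
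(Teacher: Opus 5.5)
Your reduction is correct (\(x_3x_2-x_1=a+g_b(y)\) with \(x_1=y-a\)), and so is the choice of \(b\in A_j\cap T(A_j,p)\) with \(C=A_j\cap(-b+A_j)\in p\). There is a genuine gap, however: the argument stops exactly at the obstacle you flag, and none of your proposed fixes closes it.

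First, the ``ultrafilter form'' of recurrence you quote is false as stated, already for \(g(y)=y\). Take another idempotent \(p'\ne p\) in the minimal left ideal \(\beta\mathbb{N}+p\) and a set \(C\in p\setminus p'\). Then \(p'+p=p'\), so \(\{y:-y+C\in p\}\notin p'\), and the complement of this set contains an IP set. What you can legitimately import is Furstenberg--S\'ark\"ozy, or its IP\(^*\) refinement by Bergelson--Furstenberg--McCutcheon, applied to the piecewise syndetic set \(C\). That only says \(C\cap(C-g_b(y))\neq\emptyset\) for many \(y\). It says nothing about where the base point \(a\) sits relative to \(y\), and \(y>a\) is precisely the requirement \(x_1\in\mathbb{N}\). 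Without it, no valid triple is produced.

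Your two fixes do not work either. Fixing \(x_1=n\) turns the goal into finding \(b\in A_j\) and \(y\) with \(\{y,\ y+b,\ y(y+b)\}\subseteq n+A_j\). That is just the original multiplicative problem with \(x_1\) fixed, and since \(-n+p\) is not idempotent there is no recurrence step to ``rerun''. The density fallback fails because the correspondence principle gives density along intervals you do not get to place. Moreover, if \(a\le N<y\), then \(a+g_b(y)>N\) lies outside \([1,N]\), so counting inside \([1,N]\) cannot certify \(a+g_b(y)\in A_j\).

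The missing idea is to choose the small point \(x_1\) first and to handle the product with a multiplicative idempotent rather than polynomial recurrence. Your parametrization, which determines \(x_1=y-a\) last, is what creates the positivity problem. The paper proceeds as follows:
\begin{itemize}
\item Take a nonprincipal \(e=e\cdot e\) and color \(n\) by \(c(n)\) with \(n+A_{c(n)}\in e\); this is possible because \(-n+e\) is an ultrafilter.
\item Fix \(j\) with \(c^{-1}(j)\in e\), and take any \(x_1\in c^{-1}(j)\).
\item Pick \(x_2\in c^{-1}(j)\cap(x_1+A_j)\cap D(x_1+A_j,e)\); this set lies in \(e\) because \(e\cdot e=e\).
\item Finally pick \(x_3\in(x_1+A_j)\cap(x_2+A_j)\cap x_2^{-1}(x_1+A_j)\), which is also in \(e\).
\end{itemize}
This gives \(x_2-x_1,\ x_3-x_1,\ x_3-x_2,\ x_3x_2-x_1\in A_j\) directly. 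Positivity is automatic since \(A_j\subseteq\mathbb{N}\), and no external recurrence theorem is needed.
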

	
	For this proof, we proceed as we did in the proof of \autoref{GT}, but  with addition and multiplication exchanged.
	
	\begin{proof}
		Suppose \(\mathbb{N} = \bigcup_{i=1}^mA_i.\) Let \(e\) be a multiplicatively idempotent ultrafilter. Then for all \(n \in \mathbb{N},\) \(-n + e\) is an ultrafilter on \(\mathbb{N}\). Hence
		\begin{gather*}
			\left(\forall n \in \mathbb{N}\right) \left( \exists c(n) \in \left[m\right]\right)  n+A_{c(n)} \in e.
		\end{gather*}
		
		As \(c\) defines a coloring of \(\mathbb{N}\) there exists a \(j \in \left[m\right]\) such that \(c^{-1}\left(j\right) \in e.\)
		Hence
		\begin{gather*}
			\left(\forall x \in c^{-1}\left(j\right)\right) c^{-1}\left(j\right)\cap x + A_j \cap D\left(x+A_j \cap c^{-1}\left(j\right)\right) \in e.
		\end{gather*}
		
		Define, for \(x_1 \in c^{-1}\left(j\right),\) \(B_1 =c^{-1}\left(j\right)\cap x_1+A_j \cap D\left(x_1+A_j\right).\) Hence \(\left(\forall x \in B_1\right),\)
		\begin{gather*}
			x+A_j \in e,\\
			x^{-1}\left(x_1+A_j\right) \in e, \ \textrm{and}\\
			x-x_1 \in A_j
		\end{gather*}
		Define, for \(x_2 \in B_1,\) \(B_2 = B_1 \cap x_2+A_j \cap x_2^{-1}\left(x_1+A_j\right).\)
		
		Take \(x_3 \in B_2,\) then \(\{x_2-x_1, \ x_3-x_2, \ x_3-x_1, \ x_3x_2-x_1\} \subseteq A_j.\)
	\end{proof}
	
	\section{Infinite configurations in parallel}
	
	In this section, we will construct configurations in parallel meaning that we will form a sequence which generates more than one configuration. These parallel configurations are separately monochromatic. Though, in the next section we will see some examples of parallel configurations which are jointly monochromatic.
	\begin{Proposition}\label{partitionTheorem}
		Let \(\left(S,\cdot\right)\) be a semigroup. Fix \(p \in \beta S.\) If \(S = \bigcup_{i=1}^mA_i\) then \(S = \bigcup_{i=1}^mD\left(A_i\right);\) moreover, \(S = \bigcup_{i,j \in \left[m\right]}A_i\cap D\left(A_j\right).\)
	\end{Proposition}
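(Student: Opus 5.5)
The plan is to use the Ramsey property of \(p\) directly on the partition of \(S\) induced by a fixed element. Here \(D\left(A_i\right) = D\left(A_i,p\right) = \{x \in S: x^{-1}A_i \in p\}\), with \(x^{-1}A = \{y\in S: xy \in A\}\) as in the definitions for \(\mathbb{N}\), now read in the semigroup \(S\).

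For the first claim, fix an arbitrary \(x \in S\) and show that \(x \in D\left(A_i\right)\) for some \(i\).
\begin{itemize}
\item Since \(S = \bigcup_{i=1}^m A_i\), every \(y\in S\) has \(xy\) in some \(A_i\). Hence
\[S = x^{-1}S = \bigcup_{i=1}^m x^{-1}A_i.\]
\item As \(p\) is an ultrafilter, \(S \in p\).
\item The Ramsey property, applied inductively to the finite union (splitting off one set at a time), gives some \(i\) with \(x^{-1}A_i \in p\). That is, \(x \in D\left(A_i\right)\).
\end{itemize}
Since \(x\) was arbitrary, \(S = \bigcup_{i=1}^m D\left(A_i\right)\).

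For the second claim, intersect the two covers:
\[S = S\cap S = \Big(\bigcup_{i} A_i\Big)\cap\Big(\bigcup_j D\left(A_j\right)\Big) = \bigcup_{i,j\in[m]} A_i\cap D\left(A_j\right).\]
This is just distributivity of intersection over union.

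The only step needing any care is extending the two-set Ramsey property to \(m\) sets. Write \(\bigcup_{i=1}^m B_i = B_1 \cup \bigcup_{i=2}^m B_i\) and induct on \(m\). The base case \(m=1\) is trivial, since then \(B_1 = S \in p\). No disjointness of the \(A_i\) is required, so the argument works for covers as well as partitions.
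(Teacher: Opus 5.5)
Your proof is correct and is essentially the paper's argument: the paper notes that \(s\cdot p\) is an ultrafilter and that \(A_i \in s\cdot p\) if and only if \(s^{-1}A_i \in p\), which is exactly your use of the Ramsey property of \(p\) on the cover \(S=\bigcup_i s^{-1}A_i\), written out directly. Your distributivity step for the ``moreover'' clause supplies what the paper leaves implicit.
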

	
	\begin{proof}
		For all \(s \in S,\) \(s\cdot p\) is an ultrafilter and hence partition regular. There exists an \(i\in \left[m\right]\) with \(A_i \in s\cdot p\) or, equivalently, \(s^{-1}A_i \in p.\) Thus \(s \in D\left(A_i\right).\)
	\end{proof}
	
	The following proposition -- which establishes two finite configurations in parallel --  serves as an example before we prove the infinite version.
	
	\begin{Proposition}
		If \(\mathbb{N} = \bigcup_{i=1}^mA_i\) there exist \(i,j \in \left[m\right]\) and a sequence \( x_1, x_2, x_3 \in \mathbb{N}\) with
		\begin{gather*}
			\{ x_1, \ x_2, \ x_3, \ x_1+x_2, \ x_1+x_3, \ x_2+x_3, \ x_1+x_2+x_3 \} \subseteq A_i \\
			\{x_1x_2, \ x_2x_3, \ x_1x_3, \ \left(x_1+x_2\right)x_3, \ x_1\left(x_2+x_3\right)\} \subseteq A_j
		\end{gather*}
	\end{Proposition}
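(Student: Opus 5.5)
The plan is to combine an additively idempotent ultrafilter with the refined partition of \autoref{partitionTheorem}, in the same spirit as the proof of \autoref{GT}. Fix \(e \in E_+\). Apply \autoref{partitionTheorem} to the semigroup \((\mathbb{N},\cdot)\) with \(p=e\). It gives \(\mathbb{N}=\bigcup_{i,j\in[m]} A_i\cap D(A_j)\), where \(D(A_j)=\{x: x^{-1}A_j\in e\}\). Since \(e\) is an ultrafilter, some cell \(C=A_i\cap D(A_j)\) lies in \(e\); this fixes the two colors \(i\) and \(j\). The additive configuration will be placed in \(C\subseteq A_i\). Every element \(z\in C\) satisfies \(z^{-1}A_j\in e\), which is what forces the products into \(A_j\).

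I will use the standard fact for idempotents: for \(B\in e\), the set \(B^\star=\{x\in B: -x+B\in e\}\) belongs to \(e\), and \(x\in B^\star\) implies \(-x+B^\star\in e\). The points are then chosen one at a time, each from a set in \(e\), so each choice is nonempty.
\begin{enumerate}
\item Pick \(x_1\in C^\star\).
\item Let \(E=C\cap(-x_1+C)\cap x_1^{-1}A_j\in e\), and pick \(x_2\in C^\star\cap(-x_1+C^\star)\cap E^\star\). Then \(x_1+x_2\in C^\star\). In particular \(-(x_1+x_2)+C\in e\), and \((x_1+x_2)^{-1}A_j\in e\) because \(x_1+x_2\in C\subseteq D(A_j)\).
\item Pick
\[x_3\in E\cap(-x_2+E)\cap\bigl(-(x_1+x_2)+C\bigr)\cap x_2^{-1}A_j\cap (x_1+x_2)^{-1}A_j .\]
This is a finite intersection of members of \(e\); the set \(-x_2+E\) is in \(e\) since \(x_2\in E^\star\).
\end{enumerate}

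The membership checks are now routine.
\begin{itemize}
\item The points \(x_1,x_2,x_3\) lie in \(C\subseteq A_i\).
\item \(x_1+x_2\in C\) since \(x_2\in -x_1+C\); \(x_1+x_3\in C\) since \(x_3\in E\); \(x_2+x_3\in C\) since \(x_2+x_3\in E\); and \(x_1+x_2+x_3\in C\) from step 3.
\item \(x_1x_2, x_1x_3\in A_j\) since \(x_2,x_3\in E\subseteq x_1^{-1}A_j\); \(x_2x_3\in A_j\) since \(x_3\in x_2^{-1}A_j\); and \((x_1+x_2)x_3\in A_j\) by the last set in step 3.
\item \(x_1(x_2+x_3)\in A_j\) because \(x_2+x_3\in E\subseteq x_1^{-1}A_j\).
\end{itemize}

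The only delicate step is the mixed term \(x_1(x_2+x_3)\). It requires the sum \(x_2+x_3\), rather than the single points, to lie in the dilated set \(x_1^{-1}A_j\). For this the set \(x_1^{-1}A_j\) must be built into \(E\) before \(x_2\) is chosen, and \(x_2\) must be taken in \(E^\star\), so that \(-x_2+E\in e\) is available when \(x_3\) is chosen. Everything else is bookkeeping of finitely many sets in \(e\). The same bookkeeping, iterated, is what the infinite version will require.
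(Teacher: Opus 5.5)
Your proof is correct and follows essentially the same route as the paper: fix \(e\in E_+\), take the cell \(A_i\cap D(A_j)\in e\) given by \autoref{partitionTheorem}, and choose \(x_1,x_2,x_3\) successively from sets in \(e\) using the \(B^\star\) trick. Your \(E\) is the paper's \(B_1\cap B_2(x_1)\), and your final set is a slightly leaner version of \(B_3^*\). All the membership checks go through; the set \(-(x_1+x_2)+C\) in step 3 is redundant, since \(x_2+x_3\in E\subseteq -x_1+C\) already gives \(x_1+x_2+x_3\in C\).
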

	
	\begin{proof}
		Fix \(e \in E_{+}\left(\beta \mathbb{N}\right).\) By \autoref{partitionTheorem}, \(\mathbb{N} = \bigcup_{i,j \in \left[m\right]}A_i\cap D\left(A_j\right).\) There exist an \(i,j \in \left[m\right]\) such that \(A_i \cap D\left(A_j\right) \in e.\)
		
		Define \(B_1 = A_i \cap D\left(A_j\right)\) and \(B_1^* = B_1 \cap T\left(B_1\right).\) Then \(B_1^* \in e.\)
		
		The consequences of \(x\in B_1^*:\)
		
		\begin{enumerate}
			\item \(x \in B_1\)
			\begin{enumerate}
				\item \(x \in A_i\)
				\item \(x^{-1}A_j \in e\)
			\end{enumerate}
			\item \(x \in T\left(B_1\right)\)
			\begin{enumerate}
				\item \(-x+B_1 \in e\)
			\end{enumerate}
		\end{enumerate}
		
		Define \(B_2\left(x\right) = x^{-1}A_j\cap -x+B_1\) and note that if \(x \in B_1^*\) then \(B_2\left(x\right) \in e\) by the consequences above.
		
		\medskip
		
		Take \(x_1\in B_1^*.\) The consequences above apply. We have \(x_1 \in A_i.\)
		
		\medskip
		
		Define \(B_2^* = B_1^* \cap B_2\left(x_1\right) \cap T\left(B_1^* \cap B_2\left(x_1\right)\right).\) Note that this is in \(e;\) in particular \(T\left(B_1^* \cap B_2\left(x_1\right)\right) \in e\) by the additive idempotency of \(e.\)
		
		The consequences of \(x\in B_2^*:\)
		\begin{enumerate}
			\item \(x \in B_1^*\) (and hence those consequences apply)
			\item \(x \in B_2\left(x_1\right)\)
			\begin{enumerate}
				\item \(xx_1 \in A_j\)
				\item \(x+x_1 \in B_1\) (and hence those consequences apply)
			\end{enumerate}
			\item \(x \in T\left(B_1^* \cap B_2\left(x_1\right)\right)\)
			\begin{enumerate}
				\item \(-x+B_1^* \in e\)
				\item \(-x+B_2\left(x_1\right) \in e\)
			\end{enumerate}
		\end{enumerate}
		
		Define \(B_3\left(y\right) = B_2\left(y\right) \cap -y+B_1^* \cap -y+B_2\left(x_1\right)\cap \left(y+x_1\right)^{-1}A_j\) and note that if \(y \in B_2^*\) then \(B_3\left(y\right) \in e\) by the consequences above.
		
		Take \(x_2 \in B_2^*.\) The consequences above apply. From \(x_2 \in B_1^*,\) we get that \(x_2 \in A_i.\) From \(x_2 \in B_2\left(x_1\right)\) have that \(x_1+x_2 \in A_i\) and that \(x_1x_2 \in A_j.\)
		
		Define \(B_3^* = B_2^* \cap B_{3}\left(x_2\right) \cap T\left( B_2^* \cap B_{3}\left(x_2\right) \right)\) and note that this is in \(e.\)
		
		The consequences of \(y \in B_3^*:\)
		\begin{enumerate}
			\item \(y \in B_2^*\) (and hence those consequences apply)
			\item \(y \in B_{3}\left(x_2\right)\)
			\begin{enumerate}
				\item \(y \in B_2\left(x_2\right)\)
				\item \(y+x_2 \in B_1^*\)
				\item \(y+x_2 \in B_2\left(x_1\right)\)
				\item \(y\left(x_1+x_2\right) \in A_j.\)
			\end{enumerate}
		\end{enumerate}
		
		Take \(x_3 \in B_3^*.\) The consequences above apply. From \(x_3 \in B_2^*,\) we get that \(x_3 \in A_i,\) \(x_1+x_3 \in A_i\) and that \(x_1x_3 \in A_j.\) From \(x_3 \in B_2\left(x_2\right),\) \(x_3x_2 \in A_j\) and \(x_3+x_2 \in B_1^
		* \cap B_2\left(x_1\right).\) From \(x_3+x_2 \in B_1^*,\) \(x_3+x_2 \in A_i.\) From \(x_3+x_2 \in B_2\left(x_1\right),\) \(x_1+x_2+x_3 \in A_i\) and \(\left(x_3+x_2\right)x_1 \in A_j.\) Lastly, \(x_3\left(x_1+x_2\right)\in A_j.\)
		
		In total we have,
		\begin{gather*}
			\{ x_1, \ x_2, \ x_3, \ x_1+x_2, \ x_1+x_3, \ x_2+x_3, \ x_1+x_2+x_3 \} \subseteq A_i \\
			\{x_1x_2, \ x_2x_3, \ x_1x_3, \ \left(x_1+x_2\right)x_3, \ x_1\left(x_2+x_3\right)\} \subseteq A_j.
		\end{gather*}
	\end{proof}
	
	\begin{Theorem}\label{twoParallel}
		If \(\mathbb{N} = \bigcup_{i=1}^mA_i\) there exist \(i,j \in \left[m\right]\) and a sequence \( \langle x_t \rangle_{t\in \mathbb{N}} \in \mathbb{N}^{\mathbb{N}}\) with
		\begin{gather*}
			\operatorname{FS}\left(\langle x_t\rangle_{t\in \mathbb{N}}\right) \subseteq A_i \\
			\{\sum_{t\in \beta}x_t\sum_{t\in \alpha}x_t: \alpha,\beta \in \fs{\mathbb{N}}; \alpha < \beta\} \subseteq A_j
		\end{gather*}
	\end{Theorem}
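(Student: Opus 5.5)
We follow the scheme of the finite proposition just proved, replacing the explicit three-step bookkeeping by an inductive hypothesis. Fix \(e \in E_+\). By \autoref{partitionTheorem} we have \(\mathbb{N} = \bigcup_{i,j}A_i\cap D\left(A_j\right)\), so we may choose \(i,j\) with \(B := A_i \cap D\left(A_j\right) \in e\), where \(D\) is computed with respect to \(e\). Recall two facts. First, \(y \in D\left(A_j\right)\) means \(y^{-1}A_j \in e\). Second, for any \(C \in e\) the set \(C^* = C \cap T\left(C\right)\) lies in \(e\), and for \(x \in C^*\) we have \(-x+C^* \in e\); this is the standard idempotent lemma (cf. \cite{HiSt1}, Lemma 4.14).

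We build \(x_1,x_2,\dots\) together with sets \(C_1 \supseteq C_2 \supseteq \cdots\) in \(e\), maintaining the following hypothesis at stage \(n\), where \(\operatorname{FS}_n\) denotes the finite sums \(\sum_{t\in\alpha}x_t\) with \(\emptyset\ne\alpha\subseteq[n]\).
\begin{enumerate}
\item \(\operatorname{FS}_n \subseteq B\). This gives \(\operatorname{FS}_n\subseteq A_i\) and \(y^{-1}A_j \in e\) for every \(y \in \operatorname{FS}_n\).
\item For \(\alpha<\beta\subseteq[n]\) we have \(\sum_{\beta}x_t\sum_{\alpha}x_t \in A_j\).
\item \(C_n \in e\), with
\[C_n \subseteq B \cap \bigcap_{y\in \operatorname{FS}_n}\left(-y+B\right) \cap \bigcap_{y \in \operatorname{FS}_n}\left(-y+\bigcap_{z \in P_n(y)} z^{-1}A_j\right) \cap \bigcap_{z\in \operatorname{FS}_n} z^{-1}A_j.\]
Here \(P_n(y)\) is the set of sums \(\sum_{\alpha}x_t\) with \(\alpha<\gamma\), where \(y=\sum_\gamma x_t\). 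In addition, \(C_n\) is chosen so that \(-w+C_n\in e\) for all \(w\in C_n\).
\end{enumerate}

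Choose \(x_{n+1}\in C_n\) and check the hypothesis at stage \(n+1\). Every new finite sum has the form \(x_{n+1}\) or \(y+x_{n+1}\) with \(y\in\operatorname{FS}_n\), and it lies in \(B\) by (3). Every new product has its \(\beta\) containing \(n+1\). Write \(\beta=\gamma\cup\{n+1\}\), so the product is \(\left(x_{n+1}+y\right)z\) or \(x_{n+1}z\), where \(y=\sum_\gamma x_t\) and \(z=\sum_\alpha x_t\) with \(\alpha<\beta\). If \(\gamma\) is nonempty, then \(\alpha<\gamma\), and the term \(-y+\bigcap_{z\in P_n(y)}z^{-1}A_j\) in (3) places the product in \(A_j\). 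If \(\gamma=\emptyset\), the final term \(\bigcap_{z\in\operatorname{FS}_n}z^{-1}A_j\) does. So (1) and (2) hold at stage \(n+1\).

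The main obstacle is renewing (3): we must show that the new intersection lies in \(e\), and in particular that each term \(-y+\bigcap_{z\in P(y)}z^{-1}A_j\) is in \(e\). We use two observations.
\begin{itemize}
\item The terms \(z^{-1}A_j\) with \(z\in\operatorname{FS}_{n+1}\) are in \(e\) by (1). For \(y\in \operatorname{FS}_n\), the sums \(x_{n+1}+y\) lie in \(B\), so their sets \(\left(x_{n+1}+y\right)^{-1}A_j\) are also in \(e\).
\item For the translated terms we strengthen the hypothesis, as in the finite proposition, by folding into \(C_n\) the conditions \(-w+C_{n-1}\in e\) for \(w \in C_n\). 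Then choosing \(x_{n+1}\in C_n\) gives \(-x_{n+1}+C_n \in e\), and with it \(-x_{n+1}+\left(-y+E\right)\in e\) for each earlier term \(-y+E\) of \(C_n\). Since \(-x_{n+1}+(-y+E) = -(x_{n+1}+y)+E\), these supply the translated terms indexed by the new sums \(x_{n+1}+y\).
\end{itemize}
Concretely, we set
\[C_{n+1}' = C_n\cap\left(-x_{n+1}+C_n\right)\cap\bigcap_{z\in \operatorname{FS}_{n+1}}z^{-1}A_j \cap \bigcap_{y\in\operatorname{FS}_{n+1}}\left(-y+B\right)\cap \cdots,\]
a finite intersection of members of \(e\), and then \(C_{n+1}=\left(C_{n+1}'\right)^*\). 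The subtlety is that the \(-y+\bigcap z^{-1}A_j\) terms for the new \(y\) involve new \(z\in P_{n+1}(y)\). For \(y = x_{n+1}+y'\) with \(y'=\sum_{\gamma'}x_t\), the condition \(\alpha<\gamma'\cup\{n+1\}\) forces \(\alpha<\gamma'\), so \(P_{n+1}(y)=P_n(y')\). The new \(y\) therefore reuses the old \(z\)'s, and the translation argument above suffices. For \(y=x_{n+1}\) alone, \(P_{n+1}(y)=\operatorname{FS}_n\), and we need \(-x_{n+1}+\bigcap_{z\in \operatorname{FS}_n}z^{-1}A_j\in e\). This holds because that intersection was included in \(C_n\) and \(x_{n+1}\in C_n=C_n^*\). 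Finite intersections keep everything in \(e\), the induction closes, and the sequence \(\langle x_t\rangle_{t\in\mathbb{N}}\) witnesses both inclusions of the theorem.
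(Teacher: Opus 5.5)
Your argument is correct and is essentially the paper's proof: fix \(e\in E_+\), take a cell \(A_i\cap D(A_j)\in e\), and at each stage pick \(x_{n+1}\) from a starred set in \(e\), using \(-x_{n+1}+C_n\in e\) to carry all earlier requirements over to the new sums \(x_{n+1}+y\). The only difference is bookkeeping. You list the translates \(-x_\gamma+B\) and \(-x_\gamma+\bigcap_{\alpha<\gamma}x_\alpha^{-1}A_j\) explicitly in \(C_n\); these are better indexed by \(\gamma\) than by the value \(y\), in case two finite sums coincide. The paper instead packs them into the nested sets \(B_n(y)\), \(B_n^*\) and recovers them by descending chains.
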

	
	\begin{proof}
		Fix \(e \in E_{+}\left(\beta \mathbb{N}\right).\) By \autoref{partitionTheorem}, \(\mathbb{N} = \bigcup_{i,j \in \left[m\right]}A_i\cap D\left(A_j\right).\) There exist an \(i,j \in \left[m\right]\) such that \(A_i \cap D\left(A_j\right) \in e.\)
		
		Define \(B_1 = A_i \cap D\left(A_j\right)\) and \(B_1^* = B_1 \cap T\left(B_1\right).\) Then \(B_1^* \in e.\)
		
		Take \(x_1 \in B_1^*.\) Thus \(x_1 \in B_1\) and \(B_1^* \cap x_1^{-1}A_j \cap -x_1+B_1^* \in e.\) That \(-x_1+B_1^* \in e\) may not be immediately obvious. Note that since \(-x_1+B_1 \in e = e+e\) we also have that \(T\left(-x_1+B_1\right) \in e.\) Therefore, \(-x_1+B_1^* \in e.\) This is a standard trick that was used to simplify the original ultrafilter proof of Hindman's theorem. Compare, for example, the two proofs of Hindman's theorem presented here \cite{HiSt1}*{Theorem 5.8}.
		
		Define \begin{gather*}
			B_2\left(y\right) = y^{-1}A_j\cap -y+B_1^* \\
			B_2^* =  B_1^* \cap B_2\left(x_1\right) \cap T\left( B_1^* \cap B_2\left(x_1\right)\right).
		\end{gather*}
		
		Note that \(B_2^*\in e\) and for all \(y \in B_1^*,\) \(B_2\left(y\right) \in e.\) Take any \(x_2 \in B_2^*,\) then \(x_2 \in B_1\) and \(x_1x_2 \in A_j.\) As we will see, this establishes that the base case of the induction hypothesis is satisfied.
		
		Suppose we have found \(x_1, \dots, x_n\) with \(x_t \in B_t^*\) for each \(t \in \left[n\right]\) where \(B_t^*\) is defined as below.
		
		For \(n \ge 3,\) define \(H = \operatorname{FS}\left(\langle x_{t} \rangle_{t=1}^{n-2}\right)\) and
		\begin{gather*}
			B_{n}\left( y \right) = B_{n-1}\left( y \right) \cap \left(\bigcap_{a \in H}\left(y + a\right)^{-1}A_j\right) \cap -y+B_{n-1}^*,\\
			B_n^* = B_{n-1}^* \cap B_{n}\left(x_{n-1}\right) \cap T\left( B_{n-1}^* \cap B_{n}\left(x_{n-1}\right)\right).
		\end{gather*}
		
		In addition, suppose the \(x_1, \dots, x_n\) satisfy the following properties:
		
		\begin{enumerate}
			\item if \(y \in B_{n-1}^*\) then \(B_n\left(y\right) \in e,\) 
			\item \(B_n^* \in e,\)
			\item \(\operatorname{FS}\left( \langle x_i \rangle_{i=1}^n \right) \subseteq B_1^*,\) and
			\item \(\{\sum_{t\in \beta}x_{t}\sum_{t\in \alpha}x_t: \alpha, \beta \in \fs{\left[n\right]}; \alpha < \beta\} \subseteq A_j\).
		\end{enumerate}
		
		Before we carry out the induction, note the consequences of \(y \in B_k^*.\)
		
		\begin{enumerate}
			\item \(y \in B_{k-1}^*\)
			\item \(y \in B_{k}\left(x_{k-1}\right)\)
			\begin{enumerate}
				\item \(y \in B_{k-1}\left( x_{k-1} \right)\)
				\item if \(a \in  \operatorname{FS}\left(\langle x_{t} \rangle_{t=1}^{k-2}\right),\) then \(y\left(x_{k-1}+a\right) \in A_j\)
				\item \(y+x_{k-1} \in B_{k-1}^*\) \label{2c}
			\end{enumerate}
			\item \(-y+B_{k-1}^*\in e.\) \label{item3}
		\end{enumerate}
		
		For the induction step, we will first show that if \(y \in B_{n}^*\) then \(B_{n+1}\left(y\right) \in e.\) By the first assumption of the induction  and the fact that the \(B_n^*\) are decreasing by inclusion, \(B_{n}\left(y\right) \in e.\) Fix \(a \in \operatorname{FS}\left(\langle x_t\rangle_{t=1}^{n-1}\right).\) If \(a \in \operatorname{FS}\left( \langle x_t \rangle_{t=1}^{n-2} \right),\) then -- since \(B_n\left(y\right) \in e\) -- \(\left(y+a\right)^{-1}A_j \in e,\) as desired. Suppose \(a = x_{n-1}+b\) for some \(b \in \operatorname{FS}\left( \langle x_t \rangle_{t=1}^{n-2} \right).\) In other words, \(a \in \operatorname{FS}\left(\langle x_t\rangle_{t=1}^{n-1}\right) \setminus \operatorname{FS}\left(\langle x_t\rangle_{t=1}^{n-2}\right).\)
		
		We have the following chain of implications:
		
		\begin{flalign*}
			y\in B_n\left(x_{n-1}\right)    & \Rightarrow y+x_{n-1} \in B_{n-1}^*\\
			& \Rightarrow \left(y+x_{n-1}+b\right)^{-1}A_j \in e.
		\end{flalign*}
		
		Finally, \(y \in B_{n+1}^*.\) Thus \(-y+\left(B_n\left(x_{n-1}\right) \cap B_{n-1}^*\right) \in e\) and further more \(y+B_{n}^* \in e.\) We have shown that the first induction hypothesis holds.
		
		We will now show that \(B_{n+1}^* \in e.\) By assumption \(B_n^*\in e.\) We need to check that \( B_{n+1}\left(x_n\right) \in e.\) This holds by the first induction hypothesis. By the additive idempotency of \(e,\) \(T\left(B_n^* \cap B_{n+1}\left(x_n\right)\right) \in e.\) We have shown that \(B_{n+1}^* \in e.\)
		
		We now know that \(B_{n+1}^* \in e\) and hence is non-empty. Take \(x_{n+1} \in B_{n+1}^*.\) We have \(x_{n+1} \in B_{n+1}\left(x_{n}\right)\) and thus, by repeated application of consequence \ref{2c}, \(x_{n+1} + a \in B_1^*\) for all \(a \in \operatorname{FS}\left(\langle x_t \rangle_{t=1}^n\right).\)
		
		For the final step of the induction and the proof, we need to show that
		\[\{\sum_{t\in \beta}x_{t}\sum_{t\in \alpha}x_t: \alpha, \beta \in \fs{\left[n+1\right]}; \alpha < \beta\} \subseteq A_j.\]
		
		Suppose \(n+1 \in \beta \in \fs{\left[n+1\right]}.\) We want to show, for all \(\alpha \in \fs{\left[n\right]}\) with \(\alpha < \beta,\) that \[\sum_{t\in \beta} x_t \sum_{t \in \alpha}x_t \in A_j.\] Let \(t_1 < t_2 < \dots < t_s\) be an increasing enumeration of \(\beta \setminus \{n+1\}\) and \(m = \max \alpha.\)
		
		\begin{flalign*}
			x_{n+1} \in B_{n+1}^*   & \Rightarrow x_{n+1} + x_{t_s} \in  B_{t_s}^* \\
			& \Rightarrow x_{n+1}+x_{t_s} \in B_{t_{s-1}+1}^*\\
			& \Rightarrow x_{n+1} + x_{t_s}+ x_{t_{s-1}} \in B_{t_{s-1}}^*
		\end{flalign*}
		We continue this process until we ultimately achieve \(\sum_{t \in \beta}x_{t} \in B_{m+1}^*.\) We now wish to place our sum in one of the ``terminating'' \(y^{-1}A_j\) sets.
		\begin{flalign*}
			& \Rightarrow \sum_{t \in \beta}x_{t} \in B_{m+1}^*\\
			& \Rightarrow \sum_{t\in \beta}x_{t} \in \left(\sum_{t\in \alpha}x_{t}\right)^{-1}A_j \\
			& \Rightarrow \sum_{t\in \beta}x_{t}\sum_{t\in \alpha}x_{t} \in A_j
		\end{flalign*}
	\end{proof}
	
	As was mentioned in the proof of \autoref{GT}, when \(e \in \beta \mathbb{N}\) is additively idempotent \(n^{-1}e = \{\frac{1}{n}A \cap \mathbb{N}: A \in e\}\) is an ultrafilter on \(\mathbb{N}.\)
	
	\begin{Definition}
		For \(A \subseteq \mathbb{N}\) and \(e \in E_+\left(\beta \mathbb{N}\right)\) define \(D^{-1}\left(A,e\right) = \{x \in \mathbb{N}: \ xA \in e\}.\)
	\end{Definition}
	
	\begin{Proposition}
		Fix \(p \in \beta\mathbb{N}\) and \(e \in E_+.\) Then \(p\div e = \{A\subseteq \mathbb{N}: \ D^{-1}\left(A,e\right) \in p\}\) is an ultrafilter.
	\end{Proposition}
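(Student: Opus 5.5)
The plan is to recognize \(p\div e\) as a composition of ultrafilters, exactly in the way \(p\cdot q\) is defined via \(D\left(A,q\right)\). For each \(x\in\mathbb{N}\), let \(x^{-1}e=\{\frac{1}{x}A\cap\mathbb{N}: A\in e\}\); the paper has already used that this is an ultrafilter when \(e\in E_+\) (see \cite{HiSt1}*{Theorem 15.23.2}). I will first check the pointwise identity
\[
x\in D^{-1}\left(A,e\right)\iff A\in x^{-1}e .
\]
For the forward direction, \(xA\in e\) gives \(A=\frac{1}{x}(xA)\cap\mathbb{N}\in x^{-1}e\). For the reverse direction, \(A\supseteq\frac1x B\cap\mathbb{N}\) with \(B\in e\) gives \(xA\supseteq B\cap x\mathbb{N}\). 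Here \(x\mathbb{N}\in e\) is precisely the fact that makes \(x^{-1}e\) an ultrafilter: \(e\) is an additive idempotent, so \(x\mathbb{N}\in e\) for all \(x\). I will cite this fact, and note that it comes from idempotents being concentrated on \(x\mathbb{N}\) for each \(x\). Hence \(xA\in e\).

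With the identity in hand, \(p\div e=\{A: \{x: A\in x^{-1}e\}\in p\}\), which is the \(p\)-limit of the family \(x\mapsto x^{-1}e\). I would then verify the filter and Ramsey axioms directly, in the order below.

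\(\emptyset\notin p\div e\), since \(D^{-1}\left(\emptyset,e\right)=\emptyset\notin p\).

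Upward closure holds because \(A\subseteq B\) implies \(D^{-1}\left(A,e\right)\subseteq D^{-1}\left(B,e\right)\).

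Intersections are handled by \(D^{-1}\left(A\cap B,e\right)=D^{-1}\left(A,e\right)\cap D^{-1}\left(B,e\right)\). The \(\supseteq\) inclusion needs injectivity of multiplication by \(x\), which gives \(x(A\cap B)=xA\cap xB\); the \(\subseteq\) inclusion is monotonicity.

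The Ramsey property follows from the equation
\[
D^{-1}\left(A\cup B,e\right)=D^{-1}\left(A,e\right)\cup D^{-1}\left(B,e\right).
\]
This holds because \(x(A\cup B)=xA\cup xB\in e\) forces one of \(xA\) or \(xB\) into \(e\), since \(e\) is an ultrafilter. Then \(p\) being an ultrafilter puts one of the two sets in \(p\).

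Notice that the argument above never actually uses the identity with \(x^{-1}e\); monotonicity, intersections and unions work purely at the level of images. The only delicate point is nonemptiness/properness, and that point is immediate as well. So the main obstacle is essentially nil, apart from confirming that \(\mathbb{N}\in p\div e\). This requires \(D^{-1}\left(\mathbb{N},e\right)=\{x: x\mathbb{N}\in e\}=\mathbb{N}\), which again is the fact that \(x\mathbb{N}\in e\) for every additive idempotent \(e\). I would prove that by noting that \(\mathbb{N}=\bigcup_{r<x}(r+x\mathbb{N}\cup\{r\})\) up to finitely many points, so some residue class mod \(x\) lies in \(e\). Idempotence \(e=e+e\) then forces that class to be \(0\): if \(r+x\mathbb{Z}\in e\), then \(T(r+x\mathbb{Z})\) is the class \(0+x\mathbb{Z}\), and that class must lie in \(e\). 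Hence \(x\mathbb{N}\in e\), and the filter axioms complete the proof.
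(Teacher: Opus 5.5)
Your proof is correct and takes the same approach as the paper. Both verify the filter axioms and the Ramsey property directly, using \(D^{-1}(A\cap B,e)=D^{-1}(A,e)\cap D^{-1}(B,e)\) (via injectivity of multiplication by \(x\)), \(D^{-1}(A\cup B,e)=D^{-1}(A,e)\cup D^{-1}(B,e)\), and \(x\mathbb{N}\in e\) for every \(x\) when \(e\in E_+\). Your detour through \(x^{-1}e\) is unnecessary, as you note yourself; your residue-class argument for \(x\mathbb{N}\in e\) and your check that \(\emptyset\notin p\div e\) are correct and fill in details the paper leaves implicit.
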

	
	\begin{proof}
		Since \(e \in E_+,\) \(D^{-1}\left(\mathbb{N},e\right) = \mathbb{N}\) so that \(\mathbb{N} \in p\div e.\) Take \(A,B \in p\div e.\) Well, \(D^{-1}\left(A,e\right) \cap D^{-1}\left(B,e\right) = D^{-1}\left(A\cap B, e\right) \in p.\) Take \(A \in p\div e\) and \(A\subseteq B.\) Then \(xA \in p\) implies \(xB \in p\) by the upward closure of \(p.\) Hence \(D^{-1}\left(A,e\right) \subseteq D^{-1}\left(B,e\right) \in p.\) We have shown that \(p \div e\) is a filter. We will show that it is an ultrafilter by showing that it has the Ramsey property. Let \(A\cup B \in p\div e.\) Then \(D^{-1}\left(A \cup B, e\right) = D^{-1}\left(A,e\right)\cup D^{-1}\left(B,e\right) \in p\) and by the Ramsey property of \(p,\) \(D^{-1}\left(A,e\right) \in p\) or \(D^{-1}\left(B,e\right) \in p.\)
	\end{proof}
	
	\begin{Proposition}
		Fix \(e \in E_+\left(\beta \mathbb{N}\right).\) If \(\mathbb{N} = \bigcup_{i=1}^mA_i\) then \(\mathbb{N} = \bigcup_{i=1}^mD^{-1}\left(A_i\right).\)
	\end{Proposition}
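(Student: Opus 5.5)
The plan is to argue pointwise, exactly as in the proof of \autoref{partitionTheorem}, but with the forward-image ultrafilter \(x^{-1}\cdot e=\{\frac{1}{x}A\cap\mathbb{N}: A\in e\}\) in place of \(s\cdot p\). Fix \(x\in\mathbb{N}\). We must find some \(i\in[m]\) with \(xA_i\in e\), that is, \(x\in D^{-1}(A_i,e)\).

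The first step is to show that \(x\mathbb{N}\in e\). Since \(e\) is an additive idempotent, \(e\) is also an additive idempotent in \(\beta\mathbb{Z}_x\) under the reduction map \(\mathbb{N}\to\mathbb{Z}/x\mathbb{Z}\). In a finite group the only idempotent is the identity, so the image of \(e\) is the principal ultrafilter at \(0\). Concretely, let \(r\) be the residue class with \(x\mathbb{N}+r\in e\). Then \(e=e+e\) forces \(2r\equiv r \pmod x\), so \(r\equiv 0\) and \(x\mathbb{N}\in e\). The paper also cites \cite{HiSt1}*{Theorem 15.23.2} for the fact that \(x^{-1}\cdot e\) is an ultrafilter, and one could simply invoke this.

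The second step transfers the partition. Multiplication by \(x\) is a bijection from \(\mathbb{N}\) onto \(x\mathbb{N}\), so
\[x\mathbb{N}=\bigcup_{i=1}^m xA_i.\]
Since \(x\mathbb{N}\in e\) and \(e\) has the Ramsey property (applied iteratively to this finite union), some \(xA_i\in e\). Hence \(x\in D^{-1}(A_i,e)\). As \(x\) was arbitrary, \(\mathbb{N}=\bigcup_{i=1}^m D^{-1}(A_i)\).

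The only real obstacle is the first step, verifying \(x\mathbb{N}\in e\). Without it the images \(xA_i\) cover only \(x\mathbb{N}\), and nothing forces any of them into \(e\). The residue computation is short. Checking \(2r\equiv r\) means unpacking \(x\mathbb{N}+r\in e+e\): for \(e\)-many \(y\), the set \(-y+(x\mathbb{N}+r)\) lies in \(e\). Such a \(y\) lies in \(x\mathbb{N}+r\), so \(-y+(x\mathbb{N}+r)\) is contained in \(x\mathbb{N}+0\) up to a finite set. Since \(e\) contains exactly one residue class mod \(x\), this forces \(r\equiv 0\).
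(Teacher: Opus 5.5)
Your proof is correct and takes the paper's route. The paper just points to the proof of \autoref{GT}, where it uses that \(x^{-1}\cdot e\) is an ultrafilter (citing Hindman--Strauss) to get some \(xA_i\in e\); that is exactly your second step, and the only difference is that you prove the needed fact \(x\mathbb{N}\in e\) directly by the residue-class argument instead of citing it (there, take \(y\in (x\mathbb{N}+r)\cap T(x\mathbb{N}+r)\), which is nonempty since both sets lie in \(e\)).
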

	
	\begin{proof}
		See proof of \autoref{GT}.
	\end{proof}
	
	\begin{Proposition}\label{quotients}
		If \(\mathbb{N} = \bigcup_{i=1}^mA_i\) there exist \(j,k \in \left[m\right]\) and \(x_1,x_2,x_3 \in \mathbb{N}\) with
		\begin{gather*}
			\operatorname{FS}\left(\{x_1, \ x_2, \ x_3\}\right) \subseteq A_j \\
			\{\frac{x_2}{x_1}, \ \frac{x_3}{x_1}, \ \frac{x_3}{x_2}, \ \frac{x_3}{x_1+x_2}, \ \frac{x_2+x_3}{x_1}\} \subseteq A_{k}
		\end{gather*}
	\end{Proposition}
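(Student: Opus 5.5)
We mirror the proof of the finite parallel proposition preceding \autoref{twoParallel}, replacing the multiplicative pullback sets \(x^{-1}A_k\) by dilation sets \(xA_k\) and using \(D^{-1}\) in place of \(D\). Fix \(e \in E_+\). By the proposition just above, \(\mathbb{N} = \bigcup_{k}D^{-1}(A_k)\), so \(\mathbb{N} = \bigcup_{j,k} A_j \cap D^{-1}(A_k)\). Choose \(j,k\) with \(B_1 = A_j \cap D^{-1}(A_k) \in e\), and set \(B_1^* = B_1 \cap T(B_1) \in e\), using \(e=e+e\). For \(x \in B_1^*\) we then have \(x \in A_j\), \(xA_k \in e\) and \(-x+B_1 \in e\). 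A quotient \(y/x\) lies in \(A_k\) exactly when \(y \in xA_k\), so the sets \(xA_k\) will play the role that \(x^{-1}A_j\) played before.

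For the construction, take \(x_1 \in B_1^*\). Define \(B_2(x) = xA_k \cap (-x+B_1)\); this lies in \(e\) for \(x\in B_1^*\). Set
\[B_2^* = B_1^* \cap B_2(x_1) \cap T\bigl(B_1^* \cap B_2(x_1)\bigr) \in e,\]
and pick \(x_2 \in B_2^*\). This gives \(x_2 \in A_j\), \(x_1+x_2 \in B_1 \subseteq A_j\), \(x_2/x_1 \in A_k\), and \((x_1+x_2)A_k \in e\) because \(x_1+x_2 \in B_1 \subseteq D^{-1}(A_k)\). Next define
\[B_3(y) = B_2(y) \cap (-y+B_1^*) \cap (-y+B_2(x_1)) \cap (y+x_1)A_k,\]
and \(B_3^* = B_2^* \cap B_3(x_2) \cap T(B_2^*\cap B_3(x_2))\). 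Choose \(x_3 \in B_3^*\).

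Reading off the consequences:
\begin{itemize}
\item \(x_3 \in B_2^*\) gives \(x_3, x_1+x_3 \in A_j\) and \(x_3/x_1 \in A_k\).
\item \(x_3 \in B_2(x_2)\) gives \(x_3/x_2 \in A_k\) and \(x_2+x_3 \in B_1 \subseteq A_j\).
\item \(x_3+x_2 \in B_2(x_1)\) gives \((x_2+x_3)/x_1 \in A_k\) and \(x_1+x_2+x_3 \in A_j\).
\item \(x_3 \in (x_1+x_2)A_k\) gives \(x_3/(x_1+x_2) \in A_k\).
\end{itemize}
Together these cover all of \(\operatorname{FS}(\{x_1,x_2,x_3\})\) and the five listed quotients.

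The main obstacle is confirming that every set intersected at each stage is in \(e\). The sets \(-y+B_1^*\) and \(-y+B_2(x_1)\) are in \(e\) by the \(T(\cdot)\) factor in the definition of \(B_2^*\), exactly as in \autoref{twoParallel}. The set \(B_2(y)\) is in \(e\) for \(y\in B_1^*\). The dilation condition \((x_2+x_1)A_k \in e\) is the genuinely new step: it requires \(x_1+x_2 \in D^{-1}(A_k)\), which follows because \(B_2(x_1) \subseteq -x_1+B_1\) and \(B_1 \subseteq D^{-1}(A_k)\). This is where \(e \in E_+\) matters, since it makes each \(xA_k\) large whenever \(x \in D^{-1}(A_k)\). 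Finally, all quotients are positive integers by construction, because membership in \(xA_k\) forces divisibility by \(x\).
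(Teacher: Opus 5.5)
Your proposal is correct and is essentially the paper's own proof. It uses the same cell \(A_j\cap D^{-1}(A_k)\in e\), the same \(B_2(y)=yA_k\cap(-y+B_1)\) and \(B_2^*\), and draws \(x_3\) from the same intersection \(B_2^*\cap x_2A_k\cap(x_1+x_2)A_k\cap\bigl(-x_2+(B_1^*\cap B_2(x_1))\bigr)\), which the paper writes out directly instead of naming it \(B_3(x_2)\); the extra \(T(\cdot)\) factor in your \(B_3^*\) is harmless but unnecessary, since no fourth element is chosen.
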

	
	\begin{proof}
		Fix \(e \in E_+.\) \(\mathbb{N} = \bigcup_{j,k \in \left[m\right]}\left(A_j\cap D^{-1}\left(A_k\right) \right)\) is a partition of \(\mathbb{N}\) and hence there exists \(j,k\in \left[m\right]\) such that \(A_j\cap D^{-1}\left(A_k\right)\in e.\) Let \(B_1^* = A_j\cap D^{-1}\left(A_k\right)\cap T\left(A_j\cap D^{-1}\left(A_k\right)\right)\)
		
		Take \(x_1 \in B_1^*.\) Define \(B_2\left(y\right) = yA_k \cap -y+\left(A_j\cap D^{-1}\left(A_k\right)\right)\) Then \[B_2^*=B_1^*\cap B_2\left(x_1\right)  \cap T\left( B_1^*\cap B_2\left(x_1\right) \right)\in e.\]
		
		Take \(x_2 \in B_2^*.\) Thus \(\frac{x_2}{x_1} \in A_k, \ x_1+x_2 \in A_j\cap D^{-1}\left(A_k\right)\) and \(B_2^* \cap x_2A_k \cap \left(x_1+x_2\right)A_k \cap -x_2+ \left(B_1^*\cap B_2\left(x_1\right)\right) \in e.\)
		
		Take \(x_3 \in B_2^* \cap x_2A_k \cap \left(x_1+x_2\right)A_k \cap -x_2+ \left(B_1^*\cap B_2\left(x_1\right)\right).\) Thus, \(\frac{x_3}{x_1} \in A_k,\) \(x_1+x_3 \in A_j,\) \(\frac{x_3}{x_2} \in A_k,\) \(\frac{x_3}{x_1+x_2} \in A_k.\) Since \(x_3+x_2 \in B_1^*\cap B_2\left(x_1\right),\) \(\frac{x_3+x_2}{x_1} \in A_k,\) \(x_3+x_2+x_1, x_3+x_2 \in A_j.\) 
	\end{proof}
	
	\begin{Corollary}
		If \(\mathbb{N} = \bigcup_{i=1}^mA_i\) there exist \(j,k \in \left[m\right]\) and \(a,b,x \in \mathbb{N}\) with
		\begin{gather*}
			x\operatorname{FS}\left(\{a,b,1\}\right) \subseteq A_j \\
			\{a, \ b, \ ab, \ ab+a, \ \frac{ab}{1+a}\} \subseteq A_{k}
		\end{gather*}
	\end{Corollary}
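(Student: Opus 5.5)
The plan is a direct ultrafilter construction of \(x\), then \(a\), then \(b\), rather than an appeal to \autoref{quotients}. Write the target as follows. The set \(x\operatorname{FS}\left(\{a,b,1\}\right)\) consists of \(x, xa, xb, x(a+1), x(b+1), x(a+b), x(a+b+1)\), and these must lie in \(A_j\). The elements \(a, b, ab, a(b+1)\) and \(\frac{ab}{1+a}\) must lie in \(A_k\). I use \autoref{quotients} only as a guide to the order of choices.

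I do not expect \autoref{quotients} to give the statement by a change of variables. With \(x=x_1\), \(a=\frac{x_2}{x_1}\) and \(b=\frac{x_3}{x_1}\), the first line becomes exactly \(\operatorname{FS}(\{x_1,x_2,x_3\})\subseteq A_j\). The quotients then give \(a\), \(b\), \(\frac ba\), \(a+b\) and \(\frac{b}{1+a}\), which is not the required list. The other natural choice, \(b=\frac{x_3}{x_2}\), repairs the \(A_k\) line but spoils \(x\operatorname{FS}(\{a,b,1\})\subseteq A_j\).

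\textbf{Step 1 (choosing \(x\) and colours).} Fix \(e\in E_+\) and use \(D^{-1}(A,e)=\{n: nA\in e\}\) and the ultrafilters \(n^{-1}e\). Choose \(k\) with \(D^{-1}(A_k)\in e\), and \(j\) with \(A_j\cap D^{-1}(A_k)\) large in the appropriate scaled ultrafilter, as in the proof of \autoref{GT}. Take \(x\) in a set of the form \(A_j\cap D^{-1}(A_j)\cap T(\cdot)\), so that \(C=\frac1x A_j\cap\mathbb{N}\) belongs to \(q=x^{-1}e\). Since division by \(x\) is an additive homomorphism on \(x\mathbb{N}\in e\), the ultrafilter \(q\) is again an additive idempotent. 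With this, the conditions on \(x\operatorname{FS}(\{a,b,1\})\) become \(\operatorname{FS}(\{1,a,b\})\subseteq C\). This is a Hindman-type condition in \(q\) with a fixed first term \(1\). Arranging \(1\in C\) and \(-1+C\in q\) is the one place where \(x\) must be chosen inside a \(T\)-set, exactly as \(k=ny\) was chosen in \autoref{GT}.

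\textbf{Step 2 (choosing \(a\)).} Pick \(a\) from \(C\cap(-1+C)\cap T(C)\cap T(-1+C)\cap A_k\cap D^{-1}(A_k)\). The \(D^{-1}(A_k)\)-type requirement is what later lets \(aA_k\) and \((1+a)aA_k\)-type sets be large. For this intersection to lie in \(q\), \(q\) must also respect \(A_k\) and its dilations. So the colour \(k\) should be chosen relative to \(q\), or via the product colouring \(A_i\cap D^{-1}(A_{i'})\) from \autoref{partitionTheorem}. It should not be chosen only relative to \(e\).

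\textbf{Step 3 (choosing \(b\)).} Take \(b\) in
\[A_k\cap a^{-1}A_k\cap\left(-1+a^{-1}A_k\right)\cap a^{-1}\left((1+a)A_k\right)\cap C\cap(-1+C)\cap(-a+C)\cap(-a-1+C).\]
The sets involving \(C\) are in \(q\) by the choice of \(a\), by the \(T\)-trick used in \autoref{twoParallel}. The terms \(A_k\) and \(-1+a^{-1}A_k\), and the fact that \(a^{-1}A_k\) is large, follow from \(a\in D^{-1}(A_k)\) together with additive idempotence.

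\textbf{Main obstacle.} I expect the hard term to be \(a^{-1}\left((1+a)A_k\right)\), which encodes \(\frac{ab}{1+a}\in A_k\), together with \(ab\in A_k\). These are multiplicative translates, and an additive idempotent need not contain them. My first attempt is to take \(e\) in the closure of \(E(K(\beta\mathbb{N},+))\). That closure is a multiplicative two-sided ideal, so \(e\) can be taken to be multiplicatively idempotent as well as additively idempotent there, or at least to have all its dilates \(n^{-1}e\) and translates \(n\cdot e\) remain in the same ideal. I would then choose \(k\) so that \(A_k\) belongs to a common multiplicatively idempotent ultrafilter of that ideal, and pick \(a\in D(A_k)\cap D^{-1}(A_k)\). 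This should force \(a^{-1}A_k\) and \(a^{-1}(1+a)A_k\) into the ultrafilter from which \(b\) is drawn. If that fails, the fallback is to show that no single \(e\) works, and that the statement needs a corrected parametrization.
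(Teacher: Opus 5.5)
Your observation about the change of variables is correct, and it applies to the paper's own proof. The paper derives this corollary from \autoref{quotients} by exactly your ``other natural choice'' \(x=x_1\), \(a=\frac{x_2}{x_1}\), \(b=\frac{x_3}{x_2}\). That substitution does verify the second line: \(a=\frac{x_2}{x_1}\), \(b=\frac{x_3}{x_2}\), \(ab=\frac{x_3}{x_1}\), \(ab+a=\frac{x_2+x_3}{x_1}\) and \(\frac{ab}{1+a}=\frac{x_3}{x_1+x_2}\) are all among the quotients that \autoref{quotients} places in \(A_k\). On the first line, however, it gives \(x\operatorname{FS}\left(\{1,a,ab\}\right)=\operatorname{FS}\left(\{x_1,x_2,x_3\}\right)\subseteq A_j\). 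Nothing controls \(xb=\frac{x_1x_3}{x_2}\), and hence nothing controls \(x(1+b)\), \(x(a+b)\) or \(x(1+a+b)\) either. So the argument establishes the corollary with \(\operatorname{FS}\left(\{a,b,1\}\right)\) replaced by \(\operatorname{FS}\left(\{a,ab,1\}\right)\). This matches the later corollary built from \(a_s=\frac{x_s}{x_{s-1}}\), whose first line \(x\operatorname{FS}\left(\langle\prod_{s=1}^t a_s\rangle_{t\in\mathbb{N}}\right)\) has \(\{1,a,ab\}\) as its three-term case. The printed first line is presumably a misprint. The right move was the one you list only as a fallback: state the corrected version and derive it in one line from \autoref{quotients}.

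Your direct construction of the printed version does not go through, and the gap is exactly at the step you call the main obstacle. In Step 3 you claim that \(a^{-1}A_k\in q\) and \(-1+a^{-1}A_k\in q\) follow from \(a\in D^{-1}(A_k)\) and additive idempotence. They do not. The condition \(a\in D^{-1}(A_k,q)\) means \(aA_k\in q\), which lets you take \(b\in aA_k\), i.e.\ \(\frac{b}{a}\in A_k\). That is a quotient, which is precisely why \autoref{quotients} produces only quotients. The conditions \(ab\in A_k\), \(a(b+1)\in A_k\) and \(\frac{ab}{1+a}\in A_k\) require \(A_k\) to belong to the images of \(q\) under \(b\mapsto ab\), \(b\mapsto a(b+1)\) and \(b\mapsto\frac{ab}{1+a}\), with \(a\) itself drawn from a \(q\)-large set. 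You give no reason why one color class should lie in all of these and in \(q\), and additive idempotence of \(q\) supplies none. In particular, the shift by \(1=\frac{x}{x}\) is fixed before \(a\) is chosen, so the \(T\)-trick cannot give \(1\in T\left(a^{-1}A_k,q\right)\).

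The proposed remedy fails as well. No ultrafilter is both additively and multiplicatively idempotent; the paper records this as a consequence of Hindman's seven-coloring. Alternatively, suppose \(A_k\) lies in a separate multiplicative idempotent \(p\). Then \(a\in D(A_k,p)\) puts \(a^{-1}A_k\) in \(p\), not in the additive idempotent \(q\) from which \(b\) must be drawn to keep \(\operatorname{FS}\left(\{1,a,b\}\right)\subseteq C\). As it stands, the proposal proves nothing beyond what \autoref{quotients} already gives.
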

	
	\begin{proof}
		Take the \(x_1,x_2,x_3\) as guaranteed by \autoref{quotients} and define \(a=\frac{x_2}{x_1}, \ b=\frac{x_3}{x_2}\) and \(x=x_1.\)
	\end{proof}
	
	From the proof, it is clear that we may get an infinite version of \autoref{quotients}. Though it has a slightly different appearance, \autoref{quotients} is essentially \autoref{GT} with a parallel finite sum structure. We note that an infinite version of \autoref{GT} was achieved in \cite{dinasso26}*{Theorem 1.4}. In \autoref{DminusD} -- the infinite version of \autoref{quotients} -- we achieve the same with other configurations in parallel.
	
	\begin{Theorem}\label{DminusD}
		If \(\mathbb{N} = \bigcup_{i=1}^mA_i\) there exist \(j, k, \ell \in \left[m\right]\) and a sequence \(\langle x_{t} \rangle_{t \in \mathbb{N}}\) such that
		\begin{gather*}
			\operatorname{FS}\left( \langle x_t \rangle_{t\in \mathbb{N}} \right) \subseteq A_{j} \\
			\{\sum_{t\in \beta}x_t\sum_{t\in \alpha}x_t: \ \alpha < \beta\} \subseteq A_{k}\\
			\{\frac{\sum_{t\in \beta}x_{t}}{\sum_{t \in {\alpha}}x_t}: \ \alpha < \beta\} \subseteq A_{\ell}
		\end{gather*}
	\end{Theorem}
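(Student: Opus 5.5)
We combine the proofs of \autoref{twoParallel} and \autoref{quotients}. Fix \(e \in E_+\). Refining the partition \(\mathbb{N} = \bigcup_{i=1}^m A_i\) simultaneously by the sets \(D(A_k)\) (from \autoref{partitionTheorem}, with respect to \(e\)) and by the sets \(D^{-1}(A_\ell)\) (from the proposition following the definition of \(D^{-1}\)) gives a finite partition
\[\mathbb{N} = \bigcup_{j,k,\ell\in[m]} A_j \cap D(A_k)\cap D^{-1}(A_\ell).\]
Hence some cell \(B_1 = A_j \cap D(A_k)\cap D^{-1}(A_\ell)\) lies in \(e\). Put \(B_1^* = B_1\cap T(B_1)\in e\). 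Every \(y\in B_1\) then satisfies \(y\in A_j\), \(y^{-1}A_k\in e\) and \(yA_\ell\in e\).

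The induction is the same as in \autoref{twoParallel}, with one extra ``terminating'' set added to each \(B_n(y)\). Set \(B_2(y) = y^{-1}A_k \cap yA_\ell \cap -y+B_1^*\), and for \(n\ge 3\), with \(H=\operatorname{FS}(\langle x_t\rangle_{t=1}^{n-2})\), set
\[B_n(y) = B_{n-1}(y)\cap \bigcap_{a\in H}\Bigl((y+a)^{-1}A_k\cap (y+a)A_\ell\Bigr)\cap -y+B_{n-1}^*,\]
\[B_n^* = B_{n-1}^*\cap B_n(x_{n-1})\cap T\bigl(B_{n-1}^*\cap B_n(x_{n-1})\bigr).\]
We then choose \(x_n\in B_n^*\). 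The induction hypotheses are those of \autoref{twoParallel}, with a fifth added: all quotients \(\sum_{t\in\beta}x_t/\sum_{t\in\alpha}x_t\) with \(\alpha<\beta\subseteq[n]\) lie in \(A_\ell\).

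The key step is showing that \(y\in B_n^*\) implies \(B_{n+1}(y)\in e\). For \(a\in\operatorname{FS}(\langle x_t\rangle_{t=1}^{n-2})\) this follows from \(B_n(y)\in e\). For \(a=x_{n-1}+b\), we have \(y+x_{n-1}\in B_{n-1}^*\). Iterating consequence \ref{2c} places \(y+a\) in \(B_1\), so both \((y+a)^{-1}A_k\in e\) and \((y+a)A_\ell\in e\). This is the only new ingredient. The facts that \(B_1\subseteq D^{-1}(A_\ell)\) and that membership in \(B_1\) propagates along finite sums are exactly what make the multiplicative-inverse condition behave like the \(D(A_k)\) condition. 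Then \(B_{n+1}^*\in e\) by the additive idempotency of \(e\), exactly as before.

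For the final verification, fix \(\alpha<\beta\subseteq[n+1]\) with \(n+1\in\beta\) and \(m=\max\alpha\). The chain of implications in \autoref{twoParallel} gives \(\sum_{t\in\beta}x_t\in B_{m+1}^*\subseteq B_{m+1}(x_m)\). Membership in the terminating sets yields \(\sum_{t\in\beta}x_t\in(\sum_{t\in\alpha}x_t)^{-1}A_k\) and \(\sum_{t\in\beta}x_t\in(\sum_{t\in\alpha}x_t)A_\ell\). The first gives the product in \(A_k\). The second says \(\sum_{t\in\beta}x_t=(\sum_{t\in\alpha}x_t)z\) with \(z\in A_\ell\), so the quotient is a natural number in \(A_\ell\). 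We also get \(\operatorname{FS}\subseteq B_1^*\subseteq A_j\). The main care point is the bookkeeping: at stage \(m+1\) the intersection in \(B_{m+1}(x_m)\) must cover every \(\alpha\) with \(\max\alpha=m\), meaning \(\alpha=\{m\}\cup\alpha'\) with \(\alpha'\subseteq[m-1]\). This is handled by combining \(B_2(x_m)\), which covers \(\alpha'=\emptyset\), with the \(H\)-indexed terms and the \(a=x_{n-1}+b\) case above, just as in \autoref{twoParallel}.
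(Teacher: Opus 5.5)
Your proposal is correct and follows the paper's proof essentially step for step. You use the same cell \(B_1 = A_j\cap D(A_k)\cap D^{-1}(A_\ell)\in e\), which the paper phrases as \(A_j\in e\), \(A_k\in e\cdot e\), \(A_\ell\in e\div e\), and the same sets \(B_n(y)\), \(B_n^*\) with the added terminating sets \((y+a)A_\ell\). The differences are cosmetic: you get \((y+a)^{-1}A_k\cap(y+a)A_\ell\in e\) by pushing \(y+a\) into \(B_1\) instead of invoking \(B_n(y+x_{n-1})\in e\), and your \(B_2(y)\) correctly uses \(yA_\ell\) where the paper's text has the typo \(yA_k\).
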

	
	\begin{proof}
		Fix \(e \in E_+.\) Then there exist \(j,k,\ell \in \left[m\right]\) such that \(A_j \in e,\) \(A_{k} \in e \cdot e,\) and \(A_{\ell} \in e \div e.\) We will apply the induction process of \autoref{twoParallel}. Define \(B_1 = A_j \cap D\left(A_k\right) \cap D^{-1}\left(A_{\ell}\right)\) and \(B_1^* = B_1 \cap T\left(B_1\right).\) If \(y \in B_1^*\) then \(B_2\left(y\right) = y^{-1}A_k\cap yA_k \cap -y+B_1^* \in e.\)
		
		Take \(x_1 \in B_{1}^*\) and define \(B_2^* = B_1^* \cap B_2\left(x_1\right) \cap  T\left( B_1^* \cap B_2\left(x_1\right) \right).\) Note that \(B_2^* \in e.\)
		
		Take \(x_2 \in B_2^*.\) Since \(x_2 \in B_1^*,\) \(B_2\left(x_2\right) \in e.\) 
		
		For induction suppose we have found \(x_1, x_2, \dots, x_n\) with \(x_t \in B_t^*\) for each \(t \in \left[n\right]\) where \(B_t^*\) is defined as below.
		
		For \(n\ge 3,\) define \(H = \operatorname{FS}\left( \langle x_t \rangle_{t=1}^{n-2} \right)\) and
		\begin{gather*}
			B_n\left(y\right) = B_{n-1}\left(y\right) \cap \bigcap_{a \in H}\left(\left(y+a\right)^{-1}A_k \cap \left(y+a\right)A_{\ell}\right) \cap -y+B_{n-1}^*, \\
			B_{n}^* = B_{n-1}^* \cap B_{n}\left(x_{n-1}\right) \cap T\left(B_{n-1}^* \cap B_{n}\left(x_{n-1}\right)\right)
		\end{gather*}
		
		In addition, suppose the \(x_1, \dots, x_n\) satisfy the following properties:
		
		\begin{enumerate}
			\item if \(y \in B_{n-1}^*\) then \(B_n\left(y\right) \in e,\) 
			\item \(B_n^* \in e,\)
			\item \(\operatorname{FS}\left( \langle x_i \rangle_{i=1}^n \right) \subseteq B_1^*,\) and
			\item \(\{\sum_{t\in \beta}x_{t}\sum_{t\in \alpha}x_t: \alpha, \beta \in \fs{\left[n\right]}; \alpha < \beta\} \subseteq A_k\),
			\item \(\{\frac{\sum_{t\in \beta}x_{t}}{\sum_{t\in \alpha}x_t}: \alpha, \beta \in \fs{\left[n\right]}; \alpha < \beta\} \subseteq A_{\ell}.\)
		\end{enumerate}
		
		We will first show that if \(y \in B_{n}^*\) then \(B_{n+1}\left(y\right) \in e.\) By assumption \(B_{n}\left(y\right) \in e.\) Fix \(a \in \operatorname{FS}\left( \langle x_t \rangle_{t=1}^{n-1} \right).\) If \(a \in \operatorname{FS}\left( \langle x_t \rangle_{t=1}^{n-2} \right),\) then \(\left(y+a\right)^{-1}A_k \cap \left(y+a\right)A_{\ell} \in e,\) since \(B_{n}\left(y\right)\in e.\) Suppose \(a \in \operatorname{FS}\left( \langle x_t \rangle_{t=1}^{n-1} \right) \setminus \operatorname{FS}\left( \langle x_t \rangle_{t=1}^{n-2} \right).\) Let \(b = a-x_{n-1}\) so that \(b \in \operatorname{FS}\left( \langle x_t \rangle_{t=1}^{n-2} \right).\) Then \(y+x_{n-1} \in B_{n-1}^*\) which implies that \(B_{n}\left(y+x_{n-1}\right) \in e.\) Thus \(\left(y+a\right)^{-1}A_k \cap \left(y+a\right)A_{\ell} \in e.\) Finally, \(y \in B_{n}^*\) implies that \(-y+B_{n}^* \in e.\) We have shown that the first induction hypothesis holds.
		
		We now show that \(B_{n+1}^* \in e.\) By assumption \(B_{n}^* \in e.\) Since \(x_n \in B_{n}^*\) we have that \(B_{n+1}\left(x_n\right)\in e.\) By the idempotency of \(e,\) \(B_{n+1}^* \in e.\)
		
		Induction hypotheses \(3\) and \(4\) hold by the same argument as outlined in \autoref{twoParallel}.
		
		We want to show that for \(\beta \in \fs{\left[n+1\right]}\) with \(n+1 \in \beta\) and \(\alpha \in \fs{\left[n\right]}\) with \(\alpha < \beta,\) \[\frac{\sum_{t\in \beta}x_t}{\sum_{t\in \alpha}x_t} \in A_{\ell}.\]
		
		Let \(m = \max \alpha.\) Then \(x_{n+1} \in B_{n+1}^*\) implies that \(\sum_{t \in \beta}x_t \in B_{m+1}^*\subseteq B_{m+1}\left(x_m\right).\) Thus we have \(\sum_{t \in \beta}x_t \in \left(\sum_{t\in \alpha}x_t\right)A_{\ell}.\)
	\end{proof}
	
	\begin{Corollary}
		If \(\mathbb{N} = \bigcup_{i=1}^mA_i\) there exist \(j, k, \ell \in \left[m\right],\) a sequence \(\langle a_{t} \rangle_{t \in \mathbb{N}}\) and an \(x \in \mathbb{N}\) such that
		\begin{gather*}
			x\operatorname{FS}\left( \langle a_t \rangle_{t\in \mathbb{N}} \right) \subseteq A_{j} \\
			x^2\{\sum_{t\in \alpha}a_t\sum_{t\in \beta}a_t: \ \alpha < \beta\} \subseteq A_{k}\\
			\operatorname{FS}\left(\langle a_{t}\rangle_{t>1} \right) \cup \{\frac{\sum_{t\in \beta}a_{t}}{\sum_{t \in {\alpha}}a_{t}}: \ \alpha < \beta\} \subseteq A_{\ell}
		\end{gather*}
	\end{Corollary}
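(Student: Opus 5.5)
The plan is to obtain the corollary directly from \autoref{DminusD} by rescaling the sequence by its first term, in the same way the previous corollary was read off from \autoref{quotients}. Apply \autoref{DminusD} to the partition \(\mathbb{N} = \bigcup_{i=1}^mA_i\) to get \(j,k,\ell \in \left[m\right]\) and a sequence \(\langle x_t \rangle_{t\in\mathbb{N}}\) with the three stated containments. Set \(x = x_1\) and \(a_t = \frac{x_t}{x_1}\) for every \(t \in \mathbb{N}\), so that \(a_1 = 1\).

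The first step is to check that each \(a_t\) is a positive integer. For \(t>1\), take \(\alpha = \{1\}\) and \(\beta = \{t\}\). Then \(\alpha<\beta\), so the quotient clause of \autoref{DminusD} gives \(\frac{x_t}{x_1} \in A_{\ell} \subseteq \mathbb{N}\). Hence \(a_t \in \mathbb{N}\), and trivially \(a_1 = 1 \in \mathbb{N}\). This is the only point needing any care: integrality is not automatic but is supplied by the quotient configuration itself.

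The remaining steps are direct substitutions. For any \(\alpha \in \fs{\mathbb{N}}\) we have \(x\sum_{t\in\alpha}a_t = \sum_{t\in\alpha}x_t \in A_j\), which gives \(x\operatorname{FS}\left(\langle a_t\rangle_{t\in\mathbb{N}}\right) \subseteq A_j\). For \(\alpha<\beta\) we have \(x^2\sum_{t\in\alpha}a_t\sum_{t\in\beta}a_t = \sum_{t\in\alpha}x_t\sum_{t\in\beta}x_t \in A_k\). Quotients are unchanged by the common scaling, so \(\frac{\sum_{t\in\beta}a_t}{\sum_{t\in\alpha}a_t} = \frac{\sum_{t\in\beta}x_t}{\sum_{t\in\alpha}x_t} \in A_{\ell}\).

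Finally, let \(\beta \in \fs{\mathbb{N}}\) with \(\min\beta>1\). Then \(\{1\}<\beta\), and \(\sum_{t\in\beta}a_t = \frac{\sum_{t\in\beta}x_t}{x_1} \in A_{\ell}\) by the quotient clause with \(\alpha=\{1\}\). This gives \(\operatorname{FS}\left(\langle a_t\rangle_{t>1}\right) \subseteq A_{\ell}\). The restriction to \(t>1\) is necessary, since \(a_1 = 1\) need not lie in \(A_{\ell}\).
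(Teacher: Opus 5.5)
Your proposal is correct and is the paper's argument: apply \autoref{DminusD}, set \(x = x_1\) and \(a_t = x_t/x_1\) (so \(a_1 = 1\)), and read off the three containments. You also spell out what the paper leaves implicit, namely that the quotient clause with \(\alpha = \{1\}\) gives both the integrality of each \(a_t\) and the containment \(\operatorname{FS}(\langle a_t\rangle_{t>1}) \subseteq A_{\ell}\).
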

	
	\begin{proof}
		Apply \autoref{DminusD} and then define \(a_t = \frac{x_t}{x_1}\) for \(t>1,\) \(a_1 = 1\) and \(x = x_1.\)
	\end{proof}
	
	The multiplicative structure in \(A_{\ell}\) can be captured in the following definition.
	
	\begin{Definition}
		A sequence \(a_{\beta,\alpha},\) indexed by \(\alpha < \beta  \in \fs{\mathbb{N}},\) is \emph{multiplicatively transitive} if for all \(\beta > \alpha > \gamma\) \[a_{\beta,\alpha}a_{\alpha,\gamma} = a_{\beta, \gamma}.\]
	\end{Definition}
	
	If one defines \(a_{\beta,\alpha} = \frac{\sum_{t\in \beta}x_t}{\sum_{t \in \alpha}x_t},\) then \(a_{\beta,\alpha}\) is multiplicatively transitive and, for any fixed \(\alpha,\) it is an additive IP sequence in \(\beta.\)
	
	By considering multiplicative transitivity, we get the following corollary. The configuration contained in \(A_{\ell}\) is an infinite generalization of \autoref{GT} in which the configuration is represented as it is in \cite{Gos26}.
	
	\begin{Corollary}
		If \(\mathbb{N} = \bigcup_{i=1}^mA_i\) there exist \(j, k, \ell \in \left[m\right],\) a sequence \(\langle a_{t} \rangle_{t \in \mathbb{N}}\) -- for which \(a_1=1\) -- and an \(x \in \mathbb{N}\) such that
		\begin{gather*}
			x\operatorname{FS}\left( \langle \prod^{t}_{s=1}a_s \rangle_{t\in \mathbb{N}} \right) \subseteq A_{j} \\
			x^2\{\left(\sum_{t\in \alpha}\prod^{t}_{s=1}a_s\right)\left(\sum_{t\in \beta}\prod^{t}_{s=1}a_s\right): \ \alpha < \beta\} \subseteq A_{k}\\
			\{\frac{\sum_{t\in \beta}\prod^{t}_{s=1}a_s}{\sum_{t \in {\alpha}}\prod^{t}_{s=1}a_s}: \ \alpha < \beta\} \subseteq A_{\ell}
		\end{gather*}
	\end{Corollary}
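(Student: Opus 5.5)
This follows from \autoref{DminusD} by a change of variables. The multiplicative structure is encoded through the transitivity of the quotients \(\frac{\sum_{t\in\beta}x_t}{\sum_{t\in\alpha}x_t}\).

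First, apply \autoref{DminusD} to obtain \(j,k,\ell\in\left[m\right]\) and a sequence \(\langle x_t\rangle_{t\in\mathbb{N}}\) with the three stated memberships. Set \(x = x_1\), \(a_1 = 1\), and for \(t>1\) define \(a_t = \frac{x_t}{x_{t-1}}\). The first thing to check is that each \(a_t\) is a positive integer. Taking \(\alpha=\{t-1\}<\beta=\{t\}\) in the third family of \autoref{DminusD} gives \(\frac{x_t}{x_{t-1}}\in A_\ell\subseteq\mathbb{N}\), so \(a_t\in\mathbb{N}\). Telescoping then gives \(\prod_{s=1}^t a_s = \frac{x_t}{x_1}\) for every \(t\), hence \(x_t = x\prod_{s=1}^t a_s\).

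Next substitute this into each line of the statement. For any \(\gamma\in\fs{\mathbb{N}}\) we have \(\sum_{t\in\gamma}x_t = x\sum_{t\in\gamma}\prod_{s=1}^t a_s\). The first line becomes exactly \(\operatorname{FS}\left(\langle x_t\rangle_{t\in\mathbb{N}}\right)\subseteq A_j\). The second becomes \(\sum_{t\in\beta}x_t\sum_{t\in\alpha}x_t\in A_k\) for \(\alpha<\beta\), since the factor \(x\) from each sum produces \(x^2\) and the product is commutative. In the third line the factor \(x\) cancels from numerator and denominator, so the quotient equals \(\frac{\sum_{t\in\beta}x_t}{\sum_{t\in\alpha}x_t}\in A_\ell\).

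The only step that needs any care is integrality of the \(a_t\). The naive choice \(a_t = x_t/x_1\) from the previous corollary would make the products \(\prod_{s\le t}a_s\) wrong. Using successive ratios instead, integrality is supplied by the singleton case \(\alpha=\{t-1\}\), \(\beta=\{t\}\) of the quotient family. Everything else is a direct rewriting of \autoref{DminusD}.
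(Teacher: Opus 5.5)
Your proof is correct and matches the paper's own argument: apply \autoref{DminusD}, set \(x = x_1\), \(a_1 = 1\), and \(a_t = x_t/x_{t-1}\) for \(t > 1\), then substitute \(x_t = x\prod_{s=1}^t a_s\) into each family. Your integrality check for the \(a_t\), using the singleton case \(\alpha=\{t-1\}<\beta=\{t\}\) of the quotient family, supplies a detail the paper leaves implicit.
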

	
	\begin{proof}
		Apply \autoref{DminusD} and define \(a_{s} = \frac{x_s}{x_{s-1}}.\) Let \(x = x_1.\)
	\end{proof}
	
	As was mentioned in the previous section, we can exchange the roles of multiplication and addition in \autoref{DminusD}.
	
	\begin{Definition}
		For \(A \subseteq \mathbb{N}\) and \(q \in \beta \mathbb{N}\setminus \mathbb{N}\) define \(T^{-1}\left(A,q\right) = \{x \in \mathbb{N}: \ x+A \in q\}.\)
	\end{Definition}
	
	\begin{Proposition}
		Fix \(p \in \beta \mathbb{N}\) and \(q \in \beta\mathbb{N} \setminus \mathbb{N}.\) Then \(p-q = \{A \subseteq \mathbb{N}: \ T^{-1}\left(A,q\right) \in p\}\) is an ultrafilter.
	\end{Proposition}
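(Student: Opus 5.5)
The plan is to mirror the proof of the analogous proposition for \(p\div e\), with translation replacing dilation. First I record two elementary identities for \(T^{-1}(\cdot,q)\). Since \(x+A\) is a set of integers, \(x+(A\cap B) = (x+A)\cap(x+B)\) and \(x+(A\cup B) = (x+A)\cup(x+B)\); both hold because translation by \(x\) is injective on \(\mathbb{N}\). Consequently
\[T^{-1}\left(A\cap B,q\right) = T^{-1}\left(A,q\right)\cap T^{-1}\left(B,q\right), \qquad T^{-1}\left(A\cup B,q\right) = T^{-1}\left(A,q\right)\cup T^{-1}\left(B,q\right),\]
where the first uses closure of \(q\) under intersections and upward closure, and the second uses the Ramsey property of \(q\). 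Monotonicity also follows: if \(A\subseteq B\), then \(x+A\subseteq x+B\), so \(T^{-1}(A,q)\subseteq T^{-1}(B,q)\).

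The one step needing the hypothesis \(q\notin\mathbb{N}\) is showing \(\mathbb{N}\in p-q\) and \(\emptyset\notin p-q\). For the first, \(x+\mathbb{N} = \mathbb{N}\setminus[x]\), and the earlier lemma says every non-principal ultrafilter contains \(n+\mathbb{N}\) for all \(n\). Hence \(T^{-1}(\mathbb{N},q)=\mathbb{N}\in p\). For the second, \(T^{-1}(\emptyset,q)=\emptyset\notin p\), since \(x+\emptyset=\emptyset\notin q\).

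With these in hand, I verify the filter axioms. If \(A,B\in p-q\), then \(T^{-1}(A\cap B,q)\) is the intersection of two members of \(p\), so it lies in \(p\). Upward closure follows from monotonicity together with the upward closure of \(p\). For the Ramsey property, suppose \(A\cup B\in p-q\). Then \(T^{-1}(A,q)\cup T^{-1}(B,q)\in p\), and the Ramsey property of \(p\) places one of the two sets in \(p\), so \(A\in p-q\) or \(B\in p-q\).

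I expect no real obstacle. The only point requiring care is the top element \(\mathbb{N}\): this is where non-principality of \(q\) enters, since for principal \(q\) the set \(x+\mathbb{N}\) would miss it for large \(x\). It is also worth noting in passing, although the statement does not need it, that \(-n+q\) as in the earlier lemma is a special case: taking \(p\) to be the principal ultrafilter at \(n\) gives \(p-q=-n+q\).
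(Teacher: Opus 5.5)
Your proof is correct and takes the same approach as the paper. The paper leaves the filter axioms as ``easy to check'' and proves only the Ramsey property, via \(T^{-1}(A\cup B,q)=T^{-1}(A,q)\cup T^{-1}(B,q)\) and the Ramsey property of \(p\), exactly as you do; your explicit check that \(\mathbb{N}\in p-q\) is a useful addition, since it is the only place the hypothesis \(q\notin\mathbb{N}\) is used (for principal \(q\) and non-principal \(p\), the family \(p-q\) would be empty).
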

	
	\begin{proof}
		It is easy to check that \(p-q\) is a filter. We verify that it has the Ramsey property. Suppose that \(A \cup B \in p-q.\) Thus \(T^{-1}\left(A \cup B, q\right) = T^{-1}\left(A,q\right) \cup T^{-1}\left(B,q\right) \in p.\) By the Ramsey property of \(p,\) \(T^{-1}\left(A,q\right)\) or \(T^{-1}\left(B,q\right)\) belongs to \(p.\) Hence, \(A\) or \(B\) belongs to \(p-q.\)    
	\end{proof}
	
	\begin{Theorem}\label{TminusT}
		If \(\mathbb{N} = \bigcup_{i=1}^mA_i\) there exist \(j, k, \ell \in \left[m\right]\) and a sequence \(\langle x_{t} \rangle_{t \in \mathbb{N}}\) such that
		\begin{gather*}
			\operatorname{FP}\left( \langle x_t \rangle_{t\in \mathbb{N}} \right) \subseteq A_{j} \\
			\{\prod_{t\in \beta}x_t + \prod_{t\in \alpha}x_t: \ \alpha < \beta\} \subseteq A_{k}\\
			\{\prod_{t\in \beta}x_{t}-\prod_{t \in {\alpha}}x_t: \ \alpha < \beta\} \subseteq A_{\ell}
		\end{gather*}
	\end{Theorem}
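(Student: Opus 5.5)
We mirror the proof of \autoref{DminusD} with the two operations exchanged. Fix a multiplicative idempotent \(e \in E_{\times}\), which is non-principal, so that \(e-e\) is an ultrafilter by the preceding proposition. Write \(T(A)=T(A,e)\) and \(T^{-1}(A)=T^{-1}(A,e)\); recall \(A\in e+e\) iff \(T(A)\in e\), and \(A\in e-e\) iff \(T^{-1}(A)\in e\). Choose \(j,k,\ell\) with \(A_j\in e\), \(A_k\in e+e\), \(A_\ell\in e-e\). Then
\[B_1 = A_j\cap T(A_k)\cap T^{-1}(A_\ell)\in e,\]
and we set \(B_1^* = B_1\cap D(B_1)\), which lies in \(e\) because \(e=e\cdot e\). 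Here \(D(\cdot)\) plays the role that \(T(\cdot)\) played before. For \(y\in B_1^*\), define
\[B_2(y) = (-y+A_k)\cap (y+A_\ell)\cap y^{-1}B_1^*.\]
This set is in \(e\), using the standard trick that \(y^{-1}B_1\in e=e\cdot e\) gives \(D(y^{-1}B_1)\in e\), hence \(y^{-1}B_1^*\in e\).

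The inductive scheme is the multiplicative transcription of \autoref{twoParallel}. For \(n\ge 3\), put \(H=\operatorname{FP}(\langle x_t\rangle_{t=1}^{n-2})\) and
\[B_n(y) = B_{n-1}(y)\cap\bigcap_{a\in H}\bigl((-ya+A_k)\cap(ya+A_\ell)\bigr)\cap y^{-1}B_{n-1}^*,\]
\[B_n^* = B_{n-1}^*\cap B_n(x_{n-1})\cap D\bigl(B_{n-1}^*\cap B_n(x_{n-1})\bigr).\]
The induction hypotheses are:
\begin{enumerate}
\item if \(y\in B_{n-1}^*\) then \(B_n(y)\in e\);
\item \(B_n^*\in e\);
\item \(\operatorname{FP}(\langle x_t\rangle_{t=1}^n)\subseteq B_1^*\);
\item the sum and difference configurations over \([n]\) lie in \(A_k\) and \(A_\ell\) respectively.
\end{enumerate}
To verify (1) at stage \(n+1\), take \(a\in \operatorname{FP}(\langle x_t\rangle_{t=1}^{n-1})\). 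If \(a\) omits \(x_{n-1}\), the claim is inherited from \(B_n(y)\in e\). Otherwise \(a=x_{n-1}b\), and \(y\in B_n(x_{n-1})\) gives \(yx_{n-1}\in B_{n-1}^*\). Hence \(B_n(yx_{n-1})\in e\), which supplies \(-(yx_{n-1}b)+A_k\) and \(yx_{n-1}b+A_\ell\). Finally \(y\in D(\cdots)\) gives \(y^{-1}B_n^*\in e\). Property (2) then follows from (1) and multiplicative idempotency.

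To extract the configuration, let \(n+1\in\beta\) and \(\alpha<\beta\) with \(m=\max\alpha\). We walk down exactly as in \autoref{twoParallel}: repeated use of \(y\in B_k(x_{k-1})\Rightarrow yx_{k-1}\in B_{k-1}^*\) yields \(\prod_{t\in\beta}x_t\in B_{m+1}^*\subseteq B_{m+1}(x_m)\). That set is contained in
\[\bigl(-\textstyle\prod_{t\in\alpha}x_t + A_k\bigr)\cap\bigl(\textstyle\prod_{t\in\alpha}x_t + A_\ell\bigr).\]
Therefore \(\prod_\beta x_t+\prod_\alpha x_t\in A_k\) and \(\prod_\beta x_t-\prod_\alpha x_t\in A_\ell\), while \(\operatorname{FP}\subseteq B_1^*\subseteq A_j\).

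The main obstacle is bookkeeping rather than ideas. The difference part needs the sets \(y+A_\ell\) to lie in \(e\) whenever \(y\in T^{-1}(A_\ell)\); this is exactly the definition, so there is no issue there. What does need checking is that when we set \(y\) to be the partial product \(ya\), membership \(ya\in B_1\) holds; this comes from the FP-closure established in hypothesis (3). Care is needed to note that \(e\) is non-principal, so that \(e-e\) is defined and \(D\) of \(e\)-sets are in \(e\). The walking-down step is verbatim the argument of \autoref{twoParallel} with sums replaced by products.
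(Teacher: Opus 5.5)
Your proposal is correct and follows the paper's proof essentially verbatim: it uses the same sets \(B_1=A_j\cap T(A_k)\cap T^{-1}(A_\ell)\), \(B_n(y)\) and \(B_n^*\), the same induction hypotheses, and the same walk-down to \(B_{m+1}(x_m)\). Your step ``\(yx_{n-1}\in B_{n-1}^*\), hence \(B_n(yx_{n-1})\in e\)'' is the correctly indexed form of the paper's argument. The only detail to tidy is that \(E_\times\) contains the principal ultrafilter at \(1\), so you should explicitly choose \(e\in E_\times\setminus\mathbb{N}\), which exists because \(\beta\mathbb{N}\setminus\mathbb{N}\) is a closed multiplicative subsemigroup; not every multiplicative idempotent is non-principal.
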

	
	\begin{proof}
		Fix \(e \in E_{\times}.\) There exist \(j,k,\ell \in \left[m\right]\) such that \(A_j \in e,\) \(A_k \in e+e\) and \(A_{\ell} \in e-e.\) Define \(B_1 = A_j \cap T\left(A_k\right) \cap T^{-1}\left(A_{\ell}\right)\) and \(B_1^* = B_1 \cap D\left(B_1\right).\) Note that \(B_1^* \in e.\) If \(y \in B_1^*,\) then \(B_2\left(y\right) = -y+A_k \cap y+A_{\ell} \cap y^{-1}B_1^* \in e.\)
		
		Take \(x_1 \in B_1^*\) and define \(B_2^* = B_1^* \cap B_2\left(x_1\right) \cap D\left( B_1^* \cap B_2\left(x_1\right) \right).\) Thus \(B_{2}^* \in e.\)
		
		Suppose we have found \(x_1, x_2, \dots, x_n\) with \(x_t \in B_t^*\) for all \(t\in \left[n\right].\) Where \(B_t^*\) is defined as below.
		
		For \(n> 2\) define \(H = \operatorname{FP}\left( \langle x_t \rangle_{t=1}^{n-2} \right)\) and
		\begin{gather*}
			B_{n}\left( y \right) = B_{n-1}\left( y \right) \cap \bigcap_{a\in H} \left(-ya+A_k\cap ya+A_{\ell} \right) \cap y^{-1}B_{n-1}^* \\
			B_n^* = B_{n-1}^* \cap B_n\left(x_{n-1}\right) \cap D\left( B_{n-1}^* \cap B_n\left(x_{n-1}\right)\right)
		\end{gather*}
		
		In addition, suppose that \(x_1, \dots, x_n\) satisfy the following properties.
		\begin{enumerate}
			\item if \(y \in B_{n-1}^*,\) then \(B_n\left(y\right) \in e,\)
			\item \(B_{n}^* \in e\)
			\item \(\operatorname{FP}\left( \langle x_t \rangle_{t=1}^n \right) \subseteq B_1^*,\)
			\item \(\{\prod_{t\in \beta}x_t + \prod_{t\in \alpha}x_t: \alpha,\beta \in \fs{\left[n\right]}; \alpha < \beta\} \subseteq A_k,\)
			\item \(\{\prod_{t\in \beta}x_t - \prod_{t\in \alpha}x_t: \alpha,\beta \in \fs{\left[n\right]}; \alpha < \beta\} \subseteq A_{\ell},\)
		\end{enumerate}
		
		We first show that if \(y \in B_{n}^*,\) then \(B_{n+1}\left(y\right) \in e.\) Since \(y \in B_{n-1}^*\), we may apply the first induction hypothesis to conclude that \(B_n\left(y\right)\in e.\) Hence, if \(a \in \operatorname{FP}\left( \langle x_t \rangle_{t=1}^{n-2} \right)\) then \( -ya+A_k\cap ya+A_{\ell} \in e.\) So suppose that \(a \in \operatorname{FP}\left( \langle x_t \rangle_{t=1}^{n-1} \right)\setminus\operatorname{FP}\left( \langle x_t \rangle_{t=1}^{n-2} \right).\) Let \(b = \frac{a}{x_{n-1}}\) so that \(b \in \operatorname{FP}\left( \langle x_t \rangle_{t=1}^{n-2} \right).\) Since \(y \in B_{n}\left(x_{n-1}\right),\) \(yx_{n-1} \in B_{n-2}^*.\) By the first induction hypothesis, \(B_{n-1}\left(yx_{n-1}\right) \in e.\) In particular, \(-yx_{n-1}b+A_k \cap yx_{n-1}b+A_{\ell} \in e.\) For the final point \(y \in D\left(B_{n-1}^* \cap B_{n}\left(x_{n-1}\right)\right)\) and thus \(y^{-1}\left(B_{n-1}^* \cap B_{n}\left(x_{n-1}\right)\right) \in e.\) By the multiplicative idempotency of \(e,\) \(y^{-1}B_{n}^{*} \in e,\) as well.
		
		We now show that \(B_{n+1}^* \in e.\) By assumption, \(B_{n}^* \in e.\) Since \(x_{n} \in B_{n}^*,\) \(B_{n+1}\left(x_n\right) \in e.\) Done.
		
		We now show that \(\operatorname{FP}\left( \langle x_t \rangle_{t=1}^{n+1} \right) \subseteq B_{1}.\) Take \(a \in \operatorname{FP}\left( \langle x_t \rangle_{t=1}^{n+1} \right) \setminus \operatorname{FP}\left( \langle x_t \rangle_{t=1}^{n} \right).\) By repeated application of the fact that if \(s\le t,\) then \(x_{s+1} \in B_{t}^*,\) we get that \(a \in B_1.\)
		
		We now show that \(\{\prod_{t\in \beta}x_t + \prod_{t\in \alpha}x_t: \alpha,\beta \in \fs{\left[n+1\right]}; \alpha < \beta\} \subseteq A_k\) and \(\{\prod_{t\in \beta}x_t - \prod_{t\in \alpha}x_t: \alpha,\beta \in \fs{\left[n+1\right]}; \alpha < \beta\} \subseteq A_{\ell}.\) Suppose \(\beta \in \fs{ \left[n+1\right] }\) with \(n+1 \in \beta.\) Let \(m= \max \alpha.\) Then \(\prod_{t\in \beta}x_{t} \in B_{m+1}^*\) and thus \( \prod_{t\in \beta}x_{t} \in-\prod_{t\in \alpha}x_t + A_k \cap \prod_{t \in \alpha}x_t+A_{\ell}.\) 
	\end{proof}
	
	\section{Main results}
	
	In this section, we make a slight shift in how the results are presented. We will replace finite sums or products of a sequence with IP-sequences.
	
	\begin{Definition}
		An \emph{additive IP-sequence} is a sequence \(\langle x_{\alpha} \rangle_{\alpha \in \fs{\mathbb{N}}}\) such that for all \(\alpha, \beta \in \fs{\mathbb{N}}\) with \(\alpha \cap \beta = \emptyset\) we have
		\[x_{\alpha\cup \beta} = x_{\alpha} + x_{\beta}.\]
	\end{Definition}
	
	\begin{Definition}
		A \emph{multiplicative IP-sequence} is a sequence \(\langle x_{\alpha} \rangle_{\alpha \in \fs{\mathbb{N}}}\) such that for all \(\alpha, \beta \in \fs{\mathbb{N}}\) with \(\alpha \cap \beta = \emptyset\) we have
		\[x_{\alpha\cup \beta} = x_{\alpha}x_{\beta}.\]
	\end{Definition}
	
	\begin{Example}
		Let \(\langle x_t \rangle_{t\in \mathbb{N}}\) be any sequence of natural numbers. For \(\alpha \in \fs{\mathbb{N}},\) \(x_{\alpha} = \sum_{t\in \alpha}x_t\) is an additive IP-sequence. Likewise for multiplicative IP sequences. These are essentially the only additive IP sequences since from an additive IP sequence \(x_{\alpha}\) one can define \(x_t = x_{\{t\}}\) so that \(x_{\alpha} = \sum_{t\in \alpha}x_{t}.\)
	\end{Example}
	
	\begin{Definition}
		Let \(p\) be an ultrafilter on a semigroup \(S\). Define \(p^1 = p\) and for \(n>1\) define \(p^n = p\cdot p^{n-1}.\) In addition, define \(D^{0}\left(A\right) = A\) and for \(n>0\) define \(D^{n}\left(A\right) = D\left(D^{n-1}\left(A\right)\right).\)
	\end{Definition}
	
	\begin{prodsum}\label{THMA}
		If \(\mathbb{N} = \bigcup_{i=1}^mA_i\) then for all \(n \in \mathbb{N}\) there exist \(c_0,\dots , c_n \in \left[m\right]\) and an additive IP sequence \(\langle x_{\alpha} \rangle_{\alpha\in \fs{\mathbb{N}}}\) such that for all \(b\in \left[n\right]\) we have
		\begin{gather*}
			\{x_{\alpha}: \alpha \in \fs{\mathbb{N}}\} \subseteq A_{c_0}
			\\
			\{\prod_{s =1}^{b+1}x_{\alpha_s}: \alpha_s \in \fs{\mathbb{N}}; s<t \Rightarrow \alpha_s < \alpha_t\} \subseteq A_{c_b}
		\end{gather*}
	\end{prodsum}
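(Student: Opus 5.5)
Fix an additive idempotent \(e\in E_+\) and work with the powers \(e^{b+1}\), using the iterated sets \(D^{b}\) introduced just before the statement. Since each \(e^{b+1}\) is an ultrafilter, for every \(b\in[n]\) there is \(c_b\in[m]\) with \(A_{c_b}\in e^{b+1}\); equivalently \(D^{b}(A_{c_b})\in e\), where \(D\) is taken with respect to \(e\). (Unwinding the definition \(p^n=p\cdot p^{n-1}\) shows that \(A\in e^{b+1}\) iff \(D^b(A)\in e\).) Choose also \(c_0\) with \(A_{c_0}\in e\). Then
\[
B_1=A_{c_0}\cap\bigcap_{b=1}^n D^{b}(A_{c_b})\in e ,
\]
and, as in \autoref{twoParallel}, set \(B_1^*=B_1\cap T(B_1)\in e\). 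The key observation is that \(y\in D^{b}(A)\) means \(y^{-1}D^{b-1}(A)\in e\). So after choosing a block sum \(y=x_{\alpha_1}\in D^b(A_{c_b})\), all subsequent block sums can be forced into \(y^{-1}D^{b-1}(A_{c_b})\). Choosing a further block sum \(z\) there gives \(yz\in D^{b-1}(A_{c_b})\), and so on down to \(D^0(A_{c_b})=A_{c_b}\).

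Next I build \(x_1,x_2,\dots\) inductively by the same scheme as \autoref{twoParallel} and \autoref{DminusD}. At stage \(N\) I keep track of the finite set \(P_N\) of all ``partial products'' \(\pi=\prod_{s=1}^{r}x_{\alpha_s}\) with \(\alpha_1<\dots<\alpha_r\subseteq[N]\), \(1\le r\le b\le n\). Each such \(\pi\) is tagged by the level \(b\) at which it should still lie in \(D^{b-r}(A_{c_b})\). The invariant is that \(\pi\in D^{b-r}(A_{c_b})\) for each tagged \(\pi\), hence \(\pi^{-1}D^{b-r}(A_{c_b})\in e\). Define
\[
C_N=\bigcap_{\pi}\pi^{-1}D^{b-r}(A_{c_b})\cap\bigcap_{a\in \operatorname{FS}(\langle x_t\rangle_{t\le N})}\bigl(-a+B_1^*\bigr),
\]
which is a finite intersection of members of \(e\). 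I then need a starred set \(C_N^*=C_N\cap T(C_N)\), intersected with the previous starred sets, and choose \(x_{N+1}\) there.

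The main obstacle, exactly as in \autoref{twoParallel}, is that the new last block \(\alpha_r\) need not be the singleton \(\{N+1\}\). It is a sum \(x_{N+1}+x_{t_s}+\cdots\) of terms chosen at different stages, so I must ensure that such sums, not just \(x_{N+1}\), land in \(\pi^{-1}D^{b-r}(A_{c_b})\) for every \(\pi\) built from blocks lying entirely below \(\alpha_r\). I would handle this with the nested definition \(B_n(y)\), \(B_n^*\) of \autoref{twoParallel}. In the recursion I replace the single family \((y+a)^{-1}A_j\) by the families \(\bigl(\pi\cdot(y+a)\bigr)^{-1}D^{b-r-1}(A_{c_b})\) and \((y+a)^{-1}\pi^{-1}D^{b-r}(A_{c_b})\), and I include \(-y+B_{n-1}^*\). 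The same chain of implications via consequence \ref{2c} then shows the following: if \(\max\alpha_{r-1}=m\), then \(x_{\alpha_r}\in B^*_{m+1}\), which sits inside \(\pi^{-1}D^{b-r}(A_{c_b})\) for \(\pi=\prod_{s<r}x_{\alpha_s}\). Membership of each new partial product in the correct \(D\)-level, needed to extend the invariant, follows from \(e\)-membership exactly as membership in \(B_1\) does. The verification that \(B_{n+1}(y)\in e\) for \(y\in B_n^*\) is the same bookkeeping as before, now over finitely many levels \(b\le n\).

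Finally, taking \(r=b+1\) gives \(\prod_{s=1}^{b+1}x_{\alpha_s}\in D^0(A_{c_b})=A_{c_b}\), and \(\operatorname{FS}\subseteq B_1^*\subseteq A_{c_0}\). Setting \(x_\alpha=\sum_{t\in\alpha}x_t\) completes the argument.
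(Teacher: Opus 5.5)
Your proposal is essentially the paper's proof: the paper also fixes an additive idempotent $e$ and takes $B_1=\bigcap_{t=0}^{n}D^{t}(A_{c_t})\in e$, picking the cell of the partition $\{\bigcap_{t}D^{t}(A_{i_t})\}$ that lies in $e$, which amounts to your choice of $c_b$ as the color of $e^{b+1}$; it then runs the nested $B_s(y)$, $B_s^*$ recursion of \autoref{twoParallel} with the term $-y+B_{s-1}^*$ and the extra family $F_s(y)=\bigcap\left[u(y+x_\alpha)\right]^{-1}D^{t-r-1}(A_{c_t})$, which is exactly your first family, and checks $F_{s+1}(y)\in e$ via $y+x_\alpha\in B^*_{\min\alpha}$ as you describe. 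The one thing to fix is the level bookkeeping: as your key-observation paragraph shows, a product of $r$ blocks lies in $D^{b-r+1}(A_{c_b})$, not $D^{b-r}(A_{c_b})$, and under that convention you should drop your second family $(y+a)^{-1}\pi^{-1}D^{b-r}(A_{c_b})$ --- as a set of $z$ it is in $e$ only if $\pi(y+a)\in D^{b-r+1}(A_{c_b})$, which need not hold, and the membership $y+a\in\pi^{-1}D^{b-r}(A_{c_b})$ is something your chain of implications proves, not a set to intersect.
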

	
	\begin{proof}
		Suppose \(\mathbb{N} = \bigcup_{i=1}^mA_i.\) Fix \(n \in  \mathbb{N}.\) By \autoref{partitionTheorem}, for all \(j \in \left[m\right]\) the \(D^{j}\left(A_i\right)\) also partition \(\mathbb{N}.\) Therefore, \(B=\{\bigcap_{t=0}^{n}D^{t}\left(A_{i_t}\right): i_0,i_1,\dots, i_n \in \left[m\right]\}\) also partitions \(\mathbb{N}.\)
		
		Let \(e\) be an additive idempotent. There exist \(c_0, c_1, \dots , c_n\) such that \[B_1 = \bigcap_{t=0}^{n}D^{t}\left(A_{c_t}\right) \in e.\] Note that then \(B_1^* = B_1 \cap T\left(B_1\right) \in e.\)
		
		Define \(B_2\left(y\right) = \left(-y+B_{1}^*\right) \cap \left(-y+\bigcap_{t=1}^{n}D^{t}\left(A_{c_t}\right)\right)\cap y^{-1}\bigcap_{t=1}^nD^{t-1}\left(A_{c_t}\right).\) Note that if \(y \in B_{1}^*\) then \(B_{2}\left(y\right) \in e.\) Take \(x_1 \in B_1^*\) and define \(B^*_2 = B_1^* \cap B_2\left(x_1\right) \cap T\left(B_1^* \cap B_{2}\left(x_1\right)\right).\)
		
		Choose \(x_2 \in B_2^*, \ x_3 \in B_3^*, \dots, x_n \in B_n^*\) where the \(B_s^*\) are defined as below.
		
		For \(2<s\le n,\) define for \(\alpha \in \fs{\left[n\right]},\) \(x_{\alpha} = \sum_{i \in \alpha}x_i,\) \(\Pi_0=\{1\},\) for \(r>1\), \(\Pi_{r,s}^{\alpha}= \{\prod_{i=1}^{r} x_{\alpha_i}: \alpha_1 < \dots < \alpha_r < \alpha \in \fs{\left[s-2\right]}\}.\)
		\begin{flalign*}
			B_s\left(y\right) &= B_{s-1}\left(y\right) \cap  \left(-y+B_{s-1}^*\right)\cap F_s\left(y\right) \\
			F_s\left(y\right) &= \bigcap_{r=0}^{s-2} \bigcap_{\alpha \in \fs{\left[s-2\right]}} \bigcap_{u \in \Pi_{r,s}^{\alpha}} \bigcap_{t=r+1}^{n}\left[u\left(y+x_{\alpha}\right)\right]^{-1}D^{t-r-1}\left(A_{c_t}\right)
		\end{flalign*}
		
		Then define \[B_s^* = B_{s-1}^*\cap B_{s}\left(x_{s-1}\right) \cap T\left( B_{s-1}^*\cap B_{s}\left(x_{s-1}\right) \right).\]
		
		We remark that if \(y \in B_{s-1}^*\) then \(B_{s}\left(y\right) \in e\) and that \(B_{s}^* \in e\) for all \(2\le s \le n.\)
		
		For the rest of the basecase we need to see that \(x_{\alpha} \in B_1^*\) and for all \(b \in \left[n\right],\) \(\prod_{t=1}^{b+1}x_{\alpha_t} \in A_{c_b}.\) Let \(t_1< t_2 < \dots < t_r\) be an increasing enumeration of \(\alpha.\) Then \(x_{t_r} \in B_{t_{r-1}+1}^*\) which implies that \(x_{t_r} \in -x_{t_{r-1}} + B_{t_{r-1}}^*.\) So on until we get \(x_{\alpha} \in B_{m}^*\) where \(m = \min \alpha.\) Supose this \(\alpha\) is \(\alpha_{b+1}\) in the product \(\prod_{t=1}^{b+1}x_{\alpha_t}.\) Then \(x_{\alpha} \in F_{k+1}\left(x_{k}\right)\) where \(k = \max \alpha_{b}.\) Consequently, \(x_{\alpha} \in \left(ux_{\alpha_b}\right)^{-1}A_{c_b}\) where \(u = \prod_{t=1}^{b-1}x_{\alpha_t} \in \Pi_{b-1,s}^{\alpha_b}.\)
		
		Assume for induction on the variable \(s\), that we have found, for \(s>n,\) \(x_{n+1},\) \(x_{n+2},\) \( \dots, x_s\) such that \(x_i \in B_{i}^*\) where \(B_{i}^*\) is defined as below.
		
		For \(s>n,\) we take the same \(B_s(y)\) but now with \[F_s\left(y\right) = \bigcap_{r=0}^{n-1} \bigcap_{\alpha \in \fs{\left[s-2\right]}} \bigcap_{u \in \Pi_{r,s}^{\alpha}} \bigcap_{t=r+1}^{n}\left[u\left(y+x_{\alpha}\right)\right]^{-1}D^{t-r-1}\left(A_{c_t}\right).\]
		
		Then define, for \(s>n,\) \(B_s^* = B_{s-1}^*\cap B_{s}\left(x_{s-1}\right)\cap T\left( B_{s-1}^*\cap B_{s}\left(x_{s-1}\right) \right).\)
		
		In addition, suppose the \(x_1, \dots, x_s\) satisfy the following properties -- with \(x_{\alpha} = \sum_{t \in \alpha}x_t:\)
		
		\begin{enumerate}
			\item if \(y \in B_{s-1}^*\) then \(B_s\left(y\right) \in e,\) 
			\item \(B_s^* \in e,\)
			\item \(\{x_{\alpha}: \alpha \in \fs{\left[s\right]}\} \subseteq B_1,\) and
			\item \(\left(\forall b \in \left[n\right]\right)\) \(\{\prod_{t=1}^{b+1}x_{\alpha_t}: \alpha_t \in \fs{\left[s\right]}; t<u \Rightarrow \alpha_t < \alpha_u\} \subseteq A_{c_b}\).
		\end{enumerate}
		
		We first show that if \(y \in B_{s}^*\) then \(B_{s+1}\left(y\right) \in e.\) By assumption \(B_{s}\left(y\right) \in e.\) Since \(y \in T\left(B_{s-1}^* \cap B_s\left(x_{s-1}\right)\right),\) we have \(-y+ B_{s-1}^* \cap B_s\left(x_{s-1}\right) \in e\) and furthermore \(-y+B_s^* \in e.\) We need to show that \(F_{s+1}\left(y\right) \in e.\)
		
		Fix \(r \in \left[0,n-1\right],\) \(\alpha \in \fs{\left[s-1\right]},\) \(u\in \Pi_{r,s+1}^{\alpha},\) \(t \in \left[r+1,n\right].\) Let \(m = \min \alpha.\) Since \(y+x_{\alpha} \in B_{m}^* \subseteq F_{m}\left(x_{m-1}\right),\) we have that \(y+x_{\alpha} \in u^{-1}D^{t-r}\left(A_{c_t}\right).\) Thus \(u\left(y + x_{\alpha}\right) \in D^{t-r}\left(A_{c_t}\right)\) and \(\left[u \left(y+x_{\alpha}\right)\right]^{-1}D^{t-r-1}\left(A_{c_t}\right)\in e.\)
		
		We have that \(B_{s+1}^* \in e\) by assumption \(2\) and that we have shown assumption \(1\) holds for all \(s.\) Take \(x_{s+1} \in B_{s+1}^*.\)
		
		We need to prove that \(x_{\alpha} \in B_{1}\) for all \(\alpha \in \fs{\left[s+1\right]}.\) If \(s+1 \not \in \alpha\) then this is true by the third induction hypothesis. Suppose \(s+1 \in \alpha.\) Let \(t_1 < \dots < t_{k}\) be an increasing enumeration of \(\alpha \setminus \{s+1\}.\) One can follow a chain of implications as in the proof of \autoref{twoParallel} to achieve \(x_{\alpha} \in B_1.\)
		
		Fix \(b \in \left[n\right].\) We want to show that \(\prod_{t=1}^{b+1}x_{\alpha_t} \in A_{c_b}\) where \(\alpha_t \in \fs{\left[s+1\right]}.\) If \(s+1 \not\in \alpha_{b+1},\) then we are done by assumption of the induction hypothesis. If \(s+1 \in \alpha_{b+1},\) then \(x_{\alpha_{b+1}} \in u^{-1}A_{c_b}\) where \(u = \prod_{t=1}^{b}x_{\alpha_t} \in x_{\alpha_b}\Pi_{b-1,s}^{\alpha_b}.\)
	\end{proof}
	
	The exact same inductive argument holds with the two operations exchanged.
	
	\begin{sumprod}
		If \(\mathbb{N} = \bigcup_{i=1}^mA_i\) then for all \(n \in \mathbb{N}\) there exist \(c_0,\dots , c_n \in \left[m\right]\) and a multiplicative IP sequence \(\langle x_{\alpha} \rangle_{\alpha\in \fs{\mathbb{N}}}\) such that for all \(b\in \left[n\right]\) we have
		\begin{gather*}
			\{x_{\alpha}: \alpha \in \fs{\mathbb{N}}\} \subseteq A_{c_0}
			\\
			\{\sum_{s =1}^{b+1}x_{\alpha_s}: \alpha_s \in \fs{\mathbb{N}}; s<t \Rightarrow \alpha_s < \alpha_t\} \subseteq A_{c_b}
		\end{gather*}
	\end{sumprod}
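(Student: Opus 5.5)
We prove Theorem B by running the proof of Theorem A with the two operations exchanged. Fix a multiplicative idempotent \(e \in E_{\times}\). For \(A\subseteq\mathbb{N}\) set \(T^0(A)=A\) and \(T^{t}(A)=T(T^{t-1}(A))\), where \(T(A)=T(A,e)=\{x: -x+A\in e\}\).

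The additive analogue of \autoref{partitionTheorem} holds because \(s+e\) is an ultrafilter for every \(s\). Hence each family \(\{T^t(A_i): i\in[m]\}\) partitions \(\mathbb{N}\), and so does the common refinement \(\{\bigcap_{t=0}^n T^t(A_{i_t})\}\). Choose \(c_0,\dots,c_n\) with
\[B_1=\bigcap_{t=0}^n T^t(A_{c_t})\in e.\]
Since \(e=e\cdot e\), the set \(B_1^*=B_1\cap D(B_1)\) is also in \(e\). The point of the iterated \(T\) is this: if \(v\in T^{t-r}(A_{c_t})\), then \(-v+T^{t-r-1}(A_{c_t})\in e\). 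After \(b\) additive steps we therefore land in \(T^0(A_{c_b})=A_{c_b}\), which is where the \((b+1)\)-term sums must lie.

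The recursion mirrors Theorem A. Write \(x_\alpha=\prod_{i\in\alpha}x_i\). Let \(\Sigma^\alpha_{r,s}\) be the set of sums \(\sum_{i=1}^r x_{\alpha_i}\) with \(\alpha_1<\dots<\alpha_r<\alpha\subseteq[s-2]\), and put \(\Sigma_0=\{0\}\). Define
\begin{gather*}
B_s(y)=B_{s-1}(y)\cap y^{-1}B^*_{s-1}\cap F_s(y),\\
F_s(y)=\bigcap_{r}\bigcap_{\alpha}\bigcap_{u\in\Sigma^\alpha_{r,s}}\bigcap_{t=r+1}^n \bigl(-(u+yx_\alpha)+T^{t-r-1}(A_{c_t})\bigr),\\
B_s^*=B_{s-1}^*\cap B_s(x_{s-1})\cap D\bigl(B_{s-1}^*\cap B_s(x_{s-1})\bigr).
\end{gather*}
Here \(r\) ranges up to \(\min(s-2,n-1)\). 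At each stage we choose \(x_s\in B_s^*\). The induction hypotheses are the four used in Theorem A, with products and sums swapped:
\begin{enumerate}
\item if \(y\in B_{s-1}^*\) then \(B_s(y)\in e\);
\item \(B_s^*\in e\);
\item \(x_\alpha\in B_1\) for all \(\alpha\subseteq[s]\);
\item every \((b+1)\)-fold ordered sum of the \(x_\alpha\) with indices in \([s]\) lies in \(A_{c_b}\).
\end{enumerate}

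Hypotheses (2) and (3) follow by the chain argument of \autoref{twoParallel} in multiplicative form, as in \autoref{TminusT}. From \(x_{s+1}\in B_{s+1}^*\) we get \(x_{s+1}x_{t_k}\in B^*_{t_k}\) for the largest earlier index \(t_k\) in \(\alpha\), and so on down. Multiplicative idempotency supplies \(y^{-1}B_s^*\in e\) from \(y\in D(B_{s-1}^*\cap B_s(x_{s-1}))\).

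Hypothesis (4) is read off directly. If \(s+1\in\alpha_{b+1}\), the chain puts \(x_{\alpha_{b+1}}\) into \(F_{m}(x_{m-1})\) for the right \(m\), with \(u=\sum_{t\le b}x_{\alpha_t}\) and \(t=b\). That term of \(F\) is \(-u+T^0(A_{c_b})\), so \(u+x_{\alpha_{b+1}}\in A_{c_b}\).

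The main obstacle is hypothesis (1), namely showing \(F_{s+1}(y)\in e\) when \(y\in B_s^*\). Fix \(r,\alpha,u,t\) and let \(m=\min\alpha\). The chain gives \(yx_\alpha\in B_m^*\subseteq B_1\cap F_m(x_{m-1})\), hence
\[u+yx_\alpha\in T^{t-r}(A_{c_t}),\]
either from \(B_1\) when \(r=0\) or from the \(F\) term at level \(r-1\) shifted by the previous partial sum. This gives \(-(u+yx_\alpha)+T^{t-r-1}(A_{c_t})\in e\), which is what \(F_{s+1}(y)\) requires.

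The delicate point is checking that the sum \(u\) built from earlier blocks is accounted for exactly once. In the additive setting we only ever add terms \(yx_\alpha\) whose block \(\alpha\) lies entirely after the blocks in \(u\), and the index ranges \(\Sigma^\alpha_{r,s}\) are chosen so that this ordering is enforced. I would verify this bookkeeping carefully, in exact parallel with the \(\Pi^\alpha_{r,s}\) step of Theorem A. Letting \(s\to\infty\) then gives the multiplicative IP sequence \(\langle x_\alpha\rangle_{\alpha\in\fs{\mathbb{N}}}\) with the required colours \(c_0,\dots,c_n\).
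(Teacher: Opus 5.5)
Your proposal is correct and follows the paper's route exactly: the paper proves Theorem B only by remarking that the inductive argument of Theorem A goes through with the two operations exchanged. You carry out that exchange (a multiplicative idempotent \(e\), iterated \(T\) in place of iterated \(D\), and \(y^{-1}B_{s-1}^*\), \(D(\cdot)\) in place of \(-y+B_{s-1}^*\), \(T(\cdot)\)) in more detail than the paper does. One remark: as literally stated, Theorem B is already witnessed by the constant sequence \(x_t=1\) (every \(x_\alpha=1\) and every \((b+1)\)-fold sum equals \(b+1\)), and the principal ultrafilter at \(1\) is itself in \(E_{\times}\), so to get a non-degenerate sequence you should fix a non-principal \(e\in E_{\times}\); your argument runs verbatim there and lets you choose the \(x_t\) distinct and greater than \(1\).
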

	
	\begin{Definition}
		Let \(m \in \mathbb{N}.\) A family of functions \(\f{F}\subseteq \mathbb{N}^{\mathbb{N}^m}\) is called a \emph{Ramsey family} if for \(\mathbb{N} = \bigcup_{i=1}^{n}A_i\) there exists \(i\in\left[n\right]\) and \(x \in \mathbb{N}^m\) such that \(\f{F}_{x} = \{f\left(x\right): f \in \f{F}\}\subseteq A_i.\) Define \(\operatorname{Dom}\left(\f{F}\right) = \mathbb{N}^m.\)
	\end{Definition}
	
	\begin{Theorem} Let \(\f{F}\) be a Ramsey family and let \(\mathbb{N} = \bigcup_{i=1}^mA_i.\) There exist \(j\in \left[m\right]\) and \(a \in \operatorname{Dom}\left(\f{F}\right)\) such that for all \(H \in \fs{\f{F}_a}\) there exists an additive IP sequence \(x_{\alpha}\) with
		\[\{\prod_{i=1}^bx_{\alpha_i}: b \in H; \alpha_1<\alpha_2< \dots <\alpha_b\}\subseteq A_j.\]
	\end{Theorem}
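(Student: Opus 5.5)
Fix an additive idempotent \(e\in E_+\). The plan is to use the color data of Theorem A as a coloring of \(\mathbb{N}\), apply the Ramsey family to that coloring, and then rerun the induction of Theorem A with the levels \(b\in H\) all using the same color \(j\).

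\textbf{Step 1: the induced coloring.} For \(b\in\mathbb{N}\), by \autoref{partitionTheorem} iterated, the sets \(D^{b-1}\left(A_1\right),\dots,D^{b-1}\left(A_m\right)\) partition \(\mathbb{N}\) (with \(D\) taken relative to \(e\)). Since \(e\) is an ultrafilter, there is a unique color \(\chi(b)\in[m]\) with \(D^{b-1}\left(A_{\chi(b)}\right)\in e\). This \(\chi\) is a finite coloring of \(\mathbb{N}\), so the Ramsey property of \(\f{F}\) gives \(j\in[m]\) and \(a\in\operatorname{Dom}\left(\f{F}\right)\) with \(\chi(f(a))=j\) for all \(f\in\f{F}\). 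Thus every \(b\in\f{F}_a\) satisfies \(D^{b-1}\left(A_j\right)\in e\). (If \(1\in\f{F}_a\), the condition for \(b=1\) is just \(A_j\in e\).) Note that \(j\) and \(a\) are chosen before \(H\), as the statement requires.

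\textbf{Step 2: rerun the induction of Theorem A.} Fix \(H\in\fs{\f{F}_a}\) and let \(n=\max H\). Rerun the proof of Theorem A with the following changes:
\begin{itemize}
\item Take \(B_1=\bigcap_{b\in H}D^{b-1}\left(A_j\right)\), which lies in \(e\) by Step 1.
\item In the definition of \(F_s(y)\), use only the targets \(D^{b-1-r-1}\left(A_j\right)\) for \(b\in H\) with \(b>r+1\), in place of \(D^{t-r-1}\left(A_{c_t}\right)\).
\end{itemize}
The indexing needs care. In Theorem A a product of \(b+1\) factors lands in \(A_{c_b}\) via \(D^b\). Here a product of \(b\) factors should land in \(A_j\), so the index shifts by one, which is why the exponent is \(b-1\). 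I will also drop the requirement \(\{x_\alpha\}\subseteq A_{c_0}\); membership in \(A_j\) for \(b=1\) is handled by \(B_1\subseteq D^0\left(A_j\right)\) when \(1\in H\).

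\textbf{Step 3: verify the inductive hypotheses.} The key fact is
\[u\left(y+x_\alpha\right)\in D^{k}\left(A_j\right)\ \Rightarrow\ \left[u\left(y+x_\alpha\right)\right]^{-1}D^{k-1}\left(A_j\right)\in e.\]
This is what makes \(F_{s+1}(y)\in e\), using \(y+x_\alpha\in B_m^*\subseteq F_m\left(x_{m-1}\right)\) exactly as in Theorem A. The sets \(T(\cdot)\) remain in \(e\) by additive idempotency, so \(B_s^*\in e\) at every stage and each \(x_s\) can be chosen. After all choices, the final chain of implications gives, for \(\alpha_1<\dots<\alpha_b\) with \(b\in H\), that \(x_{\alpha_b}\in\left(\prod_{i<b}x_{\alpha_i}\right)^{-1}A_j\). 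Setting \(x_\alpha=\sum_{t\in\alpha}x_t\) gives the required additive IP sequence.

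\textbf{Main obstacle.} The difficulty is bookkeeping rather than a new idea. The exponents in \(D^{b-1-r}\) must match the number of factors already multiplied, so the induction needs a hypothesis of the form
\[\prod_{i\le r}x_{\alpha_i}\in D^{b-1-r}\left(A_j\right)\quad\text{for all } b\in H,\ r<b.\]
This is what Theorem A's proof carries implicitly through \(F_s\), and I expect this to be the step needing the most care. The argument works because \(H\) is finite, so only finitely many exponents occur.
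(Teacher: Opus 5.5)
Your proposal is correct and follows the paper's proof. The paper colors each \(b\) by the cell with \(A_{c(b)}\in e^{b}\), equivalently \(D^{b-1}(A_{c(b)})\in e\); it then applies the Ramsey family to get \(j\) and \(a\), obtains \(\bigcap_{b\in H}D^{b-1}(A_j)\in e\), and invokes Theorem A. That last step really means rerunning Theorem A's proof with the same \(e\), so that the colors \(c_{b-1}\) can be taken equal to \(j\), which is what your Step 2 does explicitly; restricting the targets to the levels in \(H\) is harmless.

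One small slip: the displayed hypothesis in your last paragraph is off by one. A product of \(r\) factors should lie in \(D^{b-r}(A_j)\) for \(1\le r\le b\), and this is what your Step 2 exponent \(D^{b-1-r-1}(A_j)\), with \(u\) having \(r\) factors, actually encodes.
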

	
	\begin{proof}
		Let \(e\) be any additive idempotent ultrafilter in \(\beta \mathbb{N}.\) Let \(\f{F}\) be a Ramsey family of functions. For all \(n \in \mathbb{N},\) there exists a \(c(n) \in \left[m\right]\) such that \(A_{c\left(n\right)} \in e^{n}.\) There exists a \(j \in \left[m\right]\) and \(a \in \operatorname{Dom}\left(\f{F}\right)\) such that \(\f{F}_{a} \subseteq c^{-1}\left(j\right).\) Hence, for all \(H \in \fs{\f{F}_a}\) \(A_j \in \bigcap_{b \in H}e^b.\) Equivalently, \(\bigcap_{b \in H}D^{b-1}\left(A_j\right) \in e.\) By Theorem A, there exists an additive IP-sequence \(x_{\alpha}\) with
		\[\{\prod_{s=1}^{b}x_{\alpha_{s}}: \ b \in H; \alpha_1 < \alpha_2 < \dots < \alpha_b\}\subseteq A_j.\]
	\end{proof}
	
	The following corollary adds multiplicative structure.
	
	\begin{Corollary}
		If \(\mathbb{N} = \bigcup_{i=1}^mA_i,\) then there exist \(j\in \left[m\right],\) for all \(n \in \mathbb{N},\) there exist \(a_1, \dots , a_{n} \in \mathbb{N}\) and an additive IP sequence \(x_{\alpha}\) with
		\[\{\prod_{i=1}^{a_{\alpha}}x_{\alpha_i}: \alpha \in \fs{\left[n\right]}; \ a_{\alpha} = \sum_{t \in \alpha}a_t; \ \alpha_1<\alpha_2< \dots <\alpha_b\}\subseteq A_j.\]
	\end{Corollary}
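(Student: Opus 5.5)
The plan is to apply the preceding Theorem (the Ramsey family version of Theorem A) to a family of functions encoding finite sums. For fixed \(n\), let \(\f{F}^{(n)} \subseteq \mathbb{N}^{\mathbb{N}^n}\) consist of the maps \(f_\alpha(a_1,\dots,a_n) = \sum_{t\in\alpha}a_t\), one for each \(\alpha \in \fs{[n]}\). By Folkman's theorem (finite sums in one color for any finite coloring), each \(\f{F}^{(n)}\) is a Ramsey family. The statement, however, asks for a single \(j\) that works for every \(n\). Two routes are available, and I would try them in this order.

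\emph{First route.} Use the infinite family directly. Take the coloring \(c(b)\) with \(A_{c(b)} \in e^b\), where \(e\) is a fixed additive idempotent, exactly as in the proof of the preceding Theorem. By Hindman's theorem there are a \(j\) and a sequence \(\langle a_t\rangle_{t\in\mathbb{N}}\) with \(\operatorname{FS}(\langle a_t\rangle) \subseteq c^{-1}(j)\). This choice of \(j\) is independent of \(n\). Now fix \(n\), truncate to \(a_1,\dots,a_n\), and set \(H = \{a_\alpha : \alpha \in \fs{[n]}\}\). This \(H\) is a finite subset of \(c^{-1}(j)\), so \(A_j \in e^b\) for every \(b\in H\). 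Equivalently, \(\bigcap_{b\in H} D^{b-1}(A_j) \in e\).

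Next, invoke Theorem A in the refined form the preceding proof already uses. That proof shows the construction works whenever \(B_1 = \bigcap_t D^{t}(A_{c_t}) \in e\), for any prescribed colors. So I would take \(c_{b-1} = j\) for \(b \in H\) and drop the irrelevant levels. To do this cleanly, let \(N = \max H\) and run the construction of Theorem A with target colors only at the levels \(b-1\) for \(b\in H\). The sets \(F_s(y)\) then only need intersecting over those \(t\), so the induction goes through verbatim. The result is an additive IP sequence with \(\prod_{i=1}^{b} x_{\alpha_i} \in A_j\) for all \(b\in H\) and all \(\alpha_1<\dots<\alpha_b\). Since \(a_\alpha \in H\), this is exactly the displayed set (with \(b=a_\alpha\) in the range of the \(\alpha_i\)).

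\emph{Second route.} If one prefers to apply the preceding Theorem as a black box, note that it gives \(j\) depending on the family. In that case I would apply it to the single family \(\f{F}\) of all finite-sum maps \(\mathbb{N}^{\mathbb{N}}\to\mathbb{N}\). This family is Ramsey in the appropriate infinite sense by Hindman's theorem, although the paper's definition only allows domains \(\mathbb{N}^m\). That mismatch is why I prefer to redo the two-line argument of the preceding proof directly, as in the first route.

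The main obstacle is the uniformity of \(j\) over all \(n\). Folkman's theorem applied separately for each \(n\) would produce a color that depends on \(n\); pigeonholing over \(n\) would then recover a single \(j\) only for infinitely many \(n\). Using Hindman's theorem on the induced coloring \(c\) removes this problem. A secondary point is checking that the proof of Theorem A tolerates prescribing colors at only a subset of levels up to \(N\). It does, since every constraint in \(F_s\) is a separate intersection indexed by \(t\).
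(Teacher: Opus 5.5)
Your proof is correct, and its core is the paper's: the induced coloring \(b\mapsto c(b)\) with \(A_{c(b)}\in e^b\), a finite-sums theorem for \(c\), and then the construction behind Theorem A started from \(\bigcap_{b\in H}D^{b-1}(A_j)\in e\). The difference is in which finite-sums theorem is used and when.

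The paper fixes \(n\), notes that the \(n\)-term finite-sum maps form a Ramsey family (Folkman), and cites the Ramsey-family theorem. As written, this gives a color \(j\) that depends on \(n\). You apply Hindman's theorem once to \(c\). That gives the uniform \(j\) the statement asks for, and even a single sequence \(\langle a_t\rangle_{t\in\mathbb{N}}\) whose truncations serve every \(n\); only the IP sequence depends on \(n\). The cost is that you must re-run the short argument rather than cite the Ramsey-family theorem, whose definition only allows domains \(\mathbb{N}^m\).

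Your description of the obstacle is too pessimistic, though. Pigeonholing over \(n\) already suffices, because the conclusion for \(n\) implies it for every \(n'\le n\): keep the same IP sequence and truncate to \(a_1,\dots,a_{n'}\). So a color that works for infinitely many \(n\) works for all \(n\), which is a one-line repair of the paper's argument.

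Likewise, you do not need to modify Theorem A's construction at all. With \(N=\max H\), pick \(c_t\) with \(A_{c_t}\in e^{t+1}\) for \(0\le t\le N\), taking \(c_{b-1}=j\) for \(b\in H\). Then \(\bigcap_{t=0}^{N}D^t(A_{c_t})\in e\), and the proof of Theorem A applies verbatim.
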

	
	\begin{proof}
		Let \(n \in \mathbb{N}\) and note that \(\f{F} = \operatorname{FS}\left(\langle y_t\rangle_{t=1}^n\right)\) is a Ramsey family. Apply Theorem A.
	\end{proof}
	
	From here on out, the results are not known to be true when addition and multiplication are exchanged as they rely on \(c\ell\left(E_+\right)\) being a multiplicative left ideal. It is unknown if \(c\ell\left(E_{\times}\right)\) is an additive left ideal. Through applying the techniques established, we prove a strengthening of the central result in \cite{H79}.
	
	\begin{fsbprod}
		If \(\mathbb{N} = \bigcup_{i=1}^mA_i\) then for all \(n \in \mathbb{N}\) there exists \(j \in \left[m\right]\) and an additive IP sequence \(\langle x_{\alpha} \rangle \) and a multiplicative IP sequence \(\langle y_{\alpha} \rangle\) such that
		\[\{x_{\alpha},y_{\alpha},x_{\alpha}y_{\beta}: \alpha < \beta\} \subseteq A_j.\]
	\end{fsbprod}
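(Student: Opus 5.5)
Following the remark that precedes the statement, the plan uses an ultrafilter lying simultaneously in \(c\ell\left(E_+\right)\) and \(E_{\times}\). Since \(c\ell\left(E_+\right)\) is a closed multiplicative left ideal of \(\left(\beta\mathbb{N},\cdot\right)\), it contains a multiplicative idempotent \(p\) (by Ellis). Fix such a \(p\) and choose \(j\) with \(A_j \in p\). The aim is to build the two sequences so that every element of the configuration lands in a set belonging to \(p\), or to additive idempotents close to \(p\).

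\emph{Multiplicative side.} Since \(p\cdot p = p\), we have \(A_j \cap D\left(A_j,p\right) \in p\). Set \(C_1 = A_j \cap D\left(A_j\right)\) and \(C_1^* = C_1 \cap D\left(C_1\right)\). Running the inductive scheme of \autoref{TminusT} with all translates replaced by dilates, I choose \(y_1, y_2, \dots\) from a decreasing sequence of sets \(C_s^* \in p\). This yields a multiplicative IP sequence with every \(y_\alpha \in C_1 \subseteq A_j\). It also guarantees that for each \(\beta\) we have \(y_\beta^{-1}A_j \in p\) along the way. What is actually needed is the reverse order: \(x_\alpha y_\beta \in A_j\) with \(\alpha<\beta\), so \(x_\alpha\) is chosen first. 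So the plan is to interleave the two constructions. At stage \(s\) I choose \(x_s\) and then \(y_s\), maintaining that \(x_\alpha^{-1}A_j \in p\) for every \(\alpha \subseteq [s]\). Once this holds, choosing \(y_{s+1}\) inside \(\bigcap_{\alpha\subseteq[s]} x_\alpha^{-1}A_j\), together with the \(C\)-sets, is a finite intersection of members of \(p\). Products \(y_\beta\) with \(\min\beta > \max\alpha\) are handled by the usual \(D\left(\cdot\right)\)-trick: \(y_{s+1}\) also lies in \(D\) of the relevant sets, so that \(y_{\beta}\in x_\alpha^{-1}A_j\) for all such \(\beta\).

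\emph{Additive side.} I need the \(x_\alpha\) to lie in \(A_j\) and to satisfy \(x_\alpha \in D\left(A_j,p\right)\), and also in the sets \(D\) of the \(C\)-type sets needed later. The key observation is that \(p \in c\ell\left(E_+\right)\). Hence every member of \(p\) belongs to some additive idempotent \(e\), and membership passes to the intersection: if \(E := A_j \cap D\left(A_j\right)\cap \dots \in p\), then there is \(e \in E_+\) with \(E \in e\). I then run the Hindman-type construction from \autoref{twoParallel} inside \(e\) to get \(\operatorname{FS}\) sets contained in \(E\).

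\emph{Main obstacle.} The infinite interleaving is the hard step. The set \(E\) changes at each stage, because the \(y\)'s already chosen impose new requirements. In addition, the idempotent \(e\) depends on \(E\), while the additive construction needs one fixed \(e\) throughout. My first attempt is to avoid changing \(e\). I would fix, once and for all, a set \(E = A_j \cap D\left(A_j\right) \in p\) and an \(e \in E_+\) containing \(E\) (and every \(D^{k}\)-iterate needed). I would then arrange that all later requirements on the \(x\)'s are of the form \(x_\alpha \in D\left(\cdot,p\right)\) of sets already in \(p\), so that they are closed under finite intersection and are automatically met by \(x_\alpha\in E\). Concretely, I would make the requirement on \(y_\beta\) only \(y_\beta \in x_\alpha^{-1}A_j\), which is in \(p\) because \(x_\alpha\in D(A_j)\), so the constraints on \(x\) never grow. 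If this bookkeeping fails, I would fall back to proving the finite-\(n\) version. There I would choose \(x_1,\dots,x_n\) first via \autoref{twoParallel}-style induction in \(e\), and then choose \(y\) entirely in \(p\), which only requires finitely many conditions.
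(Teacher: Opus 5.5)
Your first attempt is correct and is essentially the paper's proof. You take a multiplicative idempotent \(p\in c\ell\left(E_+\right)\) with \(A_j\in p\), use an additive idempotent containing \(A_j\cap D\left(A_j,p\right)\) to produce the \(x\)'s, and choose the \(y\)'s in \(p\) inside the sets \(x_\alpha^{-1}A_j\) via the standard \(D\left(\cdot\right)\)-trick; the paper interleaves the choices of \(x_t\) and \(y_t\), but that is cosmetic. The obstacle you flag does not arise: the only requirement on the \(x\)'s is that all their finite sums lie in \(A_j\cap D\left(A_j,p\right)\), which is fixed in advance, and all dependencies run from the \(x\)'s to the \(y\)'s. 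So the finite fallback is unnecessary, and it would not give the infinite IP sequences in the statement anyway.
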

	
	\begin{proof}
		First, \(c\ell\left(E_{+}\right)\) is a multiplicative left ideal. Hence there is a minimal multiplicative left ideal \(L \subseteq c\ell\left(E_+\right).\) There is an \(e \in E_{\times}\left(K_{\times}\left(\beta \mathbb{N}\right)\right)\) with \(L = \beta\mathbb{N} \cdot e.\) There exists a \(j\in \left[m\right]\) such that \(A_j \in e \in c\ell\left(E_{+}\right).\) By the multiplicative idempotency of \(e,\) we have \(D\left(A_j,e\right) \in e.\) There exists \(f \in E_+\) with \(A_j\cap D\left(A_j,e\right) \in f.\) Thus \(\{e,f,f\cdot e\}\subseteq \overline{A}_j\) and since \(e\) is multiplicatively idempotent and \(f\) is additively idempotent, we have \(\{e^t,sf,\left(sf\right)\cdot \left(e^t\right)\}_{t,s\in\mathbb{N}}\subseteq \overline{A}_j.\) Note that by \cite{bdfgjk25}*{Corollary 3.8} for all \(t\ge 1,\) \(D\left(A_j,e\right) = D^{t}\left(A_j,e\right).\) At times we will be adding a redundant \(D^{t}\left(A_j,e\right)\) to an intersection which is already a subset of \(D\left(A_j,e\right)\). This is to add clarity for the reader.
		
		Define \(C_1^* = A_j \cap D\left(A_j,e\right)\) and note that \(C_{1}^* \in e \cap f \cap f\cdot e.\)
		
		Define \(B_1^* = C_1^* \cap T\left(C_1^*,f\right)\) and note that \(B_1^* \in f.\)
		
		If \(\xi \in B_1^*\) and \(\upsilon \in C_1^*,\) then
		\begin{enumerate}
			\item \(\xi, \ \upsilon \in A_j,\)
			\item \(\xi^{-1}A_j \cap \upsilon^{-1}A_j \in e,\) and
			\item \(-\xi+C_1^* \in f.\)
		\end{enumerate}
		
		Define \(B_2\left(\xi\right) = \left(-\xi+B_1^*\right) \cap D^2\left(A_j,e\right)\) and \(C_{2}\left(\xi,\upsilon\right) = \left(\xi^{-1}A_j\right) \cap \left(\upsilon^{-1}C_1^*\right).\) Take \(x_1\in B_1^*\) and \(y_1 \in C_1^*\) so that \[B_2^* = B_1^* \cap B_2\left(x_1\right)\cap T\left(B_2\left(x_1\right), f\right) \in f\] and \[C_2^* = C_1^* \cap C_2\left(x_1,y_1\right) \cap D\left( C_2\left(x_1,y_1\right), e \right) \in e.\]
		
		If \(\xi \in B_2^*\) and \(\upsilon \in C_2^*,\) then
		\begin{enumerate}
			\item \(\xi \in B_1^*\) and \(\upsilon \in C_1^*,\) those consequences above apply;
			\item \(\xi \in B_2\left(x_1\right);\)
			\begin{enumerate}
				\item \(\xi + x_1 \in B_1^*\)
				\item \(\left(\xi+x_1\right)^{-1}A_j \in e\)
				\item \(\xi^{-1}D\left(A_j,e\right) \in e\)
			\end{enumerate}
			\item \(-\xi+B_2\left(x_1\right) \in f\)
			\item \(\upsilon \in C_2\left(x_1,y_1\right)\)
			\begin{enumerate}
				\item \(y_1\upsilon \in C_1^*, \ x_1\upsilon \in A_j\)
			\end{enumerate}
			\item \(\upsilon^{-1}C_{2}\left(x_1,y_1\right) \in e\)
		\end{enumerate}
		
		Suppose we have found \(x_1, \dots , x_n\)  and \(y_1, \dots, y_n\) such that, for all \(t \in \left[n\right],\) \(x_t \in B_t^*\) and \(y_t \in C_t^*\) where \(B_t^*, \ C_t^*\) are defined as below.
		
		For \(t\ge 3,\) define \(H_s = \operatorname{FS}\left( \langle x_i \rangle_{i=s}^{t-2} \right),\) \(H_{t-2} = \emptyset\) and
		
		\[B_{t}\left(\xi\right) = B_{t-1}\left(\xi\right) \cap \left(-\xi +B_{t-1}^*\right) \cap D^{t}\left(A_j,e\right) ,\]
		
		\[C_{t}\left(\xi, \upsilon\right) = C_{t-1}\left(\xi, \upsilon\right) \cap \left(\bigcap_{a\in H_1}\left(\xi +a\right)^{-1}A_j\right)\cap \left( \bigcap_{s=1}^{t-2}\bigcap_{a \in H_s} \left(\xi +a\right)^{-1}D^{s}\left(A_j,e\right) \right) \cap \upsilon^{-1} C_{t-1}^*,\]
		
		\[B_{t}^* = B_{t-1}^*\cap B_{t}\left(x_{t-1}\right) \cap T\left(B_{t-1}^*\cap B_{t}\left(x_{t-1}\right),f\right),\]
		
		\[C_{t}^*= C_{t-1}^*\cap C_{t}\left(x_{t-1},y_{t-1}\right) \cap D\left(C_{t-1}^*\cap C_{t}\left(x_{t-1},y_{t-1}\right), e\right).\]
		
		Assume for induction that the \(x_1, \dots, x_n\) and \(y_1, \dots, y_n\) have the following properties: for all \(t\le n\)
		
		\begin{enumerate}
			\item if \(\xi \in B_{t-1}^*\) and \(\upsilon \in C_{t-1}^*,\) then \(B_{t}\left(\xi\right) \in f\) and \(C_{t}\left(\xi, \upsilon\right) \in e,\) 
			\item \(B_t^* \in f\) and \(C_t^* \in e,\)
			\item \(\{x_{\alpha}, y_{\alpha}, x_{\alpha}y_{\beta}: \ \alpha, \beta \in \left[t\right], \ \alpha < \beta\}\subseteq A_j\) where \(x_{\alpha} = \sum_{s\in \alpha}x_{s}\) and \(y_{\alpha} = \prod_{s\in \alpha}y_s.\)
		\end{enumerate}
		
		Suppose that \(\xi \in B_{n}^*\) and that \(\upsilon \in C_{n}^*.\) We want to show that \(B_{n+1}\left(\xi\right) \in f\) and \(C_{n+1}\left(\xi, \upsilon\right) \in e.\) By assumption \(B_n\left(\xi\right) \in f\) and \(C_{n}\left(\xi,\upsilon\right) \in e.\) As previously discussed \(D^{n+1}\left(A_j,e\right) \in f.\) By definition of \(B_{n}^*\) and \(C_n^*,\) \(-\xi+B_{n-1}^* \in f\) and \(\upsilon^{-1}C_{n-1}^*\in e.\) It is left to show that \(\bigcap_{s=0}^{n-1}\bigcap_{a \in H_s} \left(\xi +a\right)^{-1}D^{s}\left(A_j,e\right) \in e.\)
		
		Fix \(s \in \left[0,n-1\right]\) and \(a \in H_s,\) we will show that \(\xi+a \in D^{s+1}\left(A_j , e\right).\) Suppose \(a = \sum_{t \in \alpha}x_t.\) Let \(m = \min \alpha.\) Since \(\xi + a \in B_{m}^*,\) \(\xi + a \in D^{m}\left(A_j,e\right) = D\left(A_j,e\right) = D^{s+1}\left(A_j,e\right).\)
		
		We need that \(B_{n+1}^* \in f\) and \(C_{n+1}^* \in e.\) By the first induction hypothesis, \(B_{n+1}^*\left(x_n\right) \in f\) and \(C_{n+1}^*\left(x_n,y_n\right) \in e.\) Done.
		
		Take \(x_{n+1} \in B_{n+1}^*\) and \(y_{n+1} \in C_{n+1}^*.\)
		
		Suppose \(\alpha \in \fs{\left[n+1\right]}.\) Then \(x_{\alpha} = \sum_{t\in \alpha}x_{t} \in A_j\) and \(y_{\alpha} = \prod_{t\in \alpha}y_t \in A_j.\) If \(n+1 \not\in \alpha,\) then this is true by assumption. If \(n+1 \in \alpha\) then this is true as \(x_{n+1} \in B_{n+1}^*\) and \(y_{n+1} \in C_{n+1}^*\) implies that \(x_{n+1}+a \in A_j\) and \(y_{n+1}b \in A_j\) for all \(a \in \operatorname{FS}\left( \langle x_{t} \rangle_{t=1}^{n} \right)\) and \(b \in \operatorname{FP}\left( \langle y_{t} \rangle_{t=1}^{n} \right).\)
		
		Suppose that \(\alpha < \beta \in \fs{\left[n+1\right]}.\) Then \(x_{\alpha}y_{\beta} \in A_j.\) If \(n+1 \not\in \beta\) then this is true by assumption. Let \(m = \max \alpha.\) If \(n+1 \in \beta\) then \(y_{\beta} \in C_{m}^*\) and in particular this means that \(y_{\beta} \in x_{\alpha}^{-1}A_j.\) 
	\end{proof}

\section*{Acknowledgments}

I would like to thank John H Johnson Jr for discussing these ideas with me and for reading over early rough drafts.

\bibliographystyle{alpha}
\bibliography{parallelConfig}

\end{document}